\definecolor{darkpastelgreen}{rgb}{0.01, 0.75, 0.24}
\definecolor{cadmiumgreen}{rgb}{0.0, 0.42, 0.24}
\definecolor{armygreen}{rgb}{0.29, 0.33, 0.13}
\newtheorem{assumption}{Assumption}
\numberwithin{assumption}{section}
\newtheorem{theorem}{Theorem}
\numberwithin{theorem}{section}
\newtheorem{corollary}[theorem]{Corollary}
\newtheorem{lemma}[theorem]{Lemma}
\newtheorem{remark}[theorem]{Remark}
\title{\textbf{Non-asymptotic Superlinear Convergence of Standard Quasi-Newton Methods}
\vspace{3mm}}
\author{Qiujiang Jin\thanks{Department of Electrical and Computer Engineering, The University of Texas at Austin, Austin, TX, USA. \{qiujiang@austin.utexas.edu\}.}\qquad  Aryan Mokhtari\thanks{Department of Electrical and Computer Engineering, The University of Texas at Austin, Austin, TX, USA. \{mokhtari@austin.utexas.edu\}.}}
\date{\empty}
\begin{document}

\maketitle

\vspace{2mm}
\begin{abstract}
\noindent In this paper, we study and prove the non-asymptotic superlinear convergence rate of the Broyden class of quasi-Newton algorithms which includes the Davidon--Fletcher--Powell (DFP) method and the Broyden--Fletcher--Goldfarb--Shanno (BFGS) method. The asymptotic superlinear convergence rate of these quasi-Newton methods has been extensively studied in the literature, but their explicit finite--time local convergence rate is not fully investigated. In this paper, we provide a finite--time (non-asymptotic) convergence analysis for Broyden quasi-Newton algorithms under the assumptions that the objective function is strongly convex, its gradient is Lipschitz continuous, and its Hessian is Lipschitz continuous at the optimal solution. We show that in a local neighborhood of the optimal solution, the iterates generated by both DFP and BFGS converge to the optimal solution at a superlinear rate of $(1/k)^{k/2}$, where $k$ is the number of iterations. We also prove a similar local superlinear convergence result holds for the case that the objective function is self-concordant. Numerical experiments on several datasets confirm our explicit convergence rate bounds. Our theoretical guarantee is one of the first results that provide a non-asymptotic superlinear convergence rate for quasi-Newton methods.

\vspace{5mm}

\noindent \textbf{Keywords:} quasi-Newton method, superlinear convergence rate, non-asymptotic analysis, DFP algorithm, BFGS algorithm
\end{abstract}

\newpage

\section{Introduction}
\vspace{-1mm}

In this paper, we focus on the non-asymptotic convergence analysis of quasi-Newton methods for the problem of minimizing a convex function $f:\mathbb{R}^d\to \mathbb{R}$, i.e.,
\vspace{-0.5mm}
$$
\min_{x \in \mathbb{R}^d} f(x).
$$
\vspace{-0.5mm}
Specifically, we focus on two different settings.  In the first case, we assume that the objective function~$f$ is strongly convex, smooth (its gradient is Lipschitz continuous), and its Hessian is Lipschitz continuous at the optimal solution. In the second case, we study the setting where the objective function~$f$ is self-concordant. We formally define these settings in the following sections. In both considered cases, the optimal solution solution is unique and denoted by $x_*$. 

There is an extensive literature on the use of first-order methods for convex optimization, and it is well-known that the best achievable convergence rate for first-order methods, when the objective function is  strongly convex and smooth, is a linear convergence rate. Specifically, we say a sequence $\{x_k\}$ converges linearly if  $\|x_k - x_*\|\leq C\gamma^k\|x_0 - x_*\|$, where $\gamma \in (0, 1)$ is the constant of linear convergence, and $C$ is a constant possibly depending on problem parameters. Among first-order methods, the accelerated gradient method proposed in \cite{nesterov1983method} achieves a fast linear convergence rate of   $(1-\sqrt{{\mu}/{L}})^{k/2}$,  where $\mu$ is the strong convexity parameter and $L$ is the smoothness parameter (the Lipschitz constant of the gradient) \cite{nesterov2013introductory}. It is also known that the convergence rate of the accelerated gradient method is optimal for first-order methods in the setting that the problem dimension $d$ is sufficiently larger than the number of iterations \cite{nemirovsky1983problem}.  

Classical alternatives to improve the convergence rate of first-order methods are second-order methods \cite{bennett1916newton,ortega1970iterative,conn2000trust,nesterov2006cubic} and in particular Newton's method. It has been shown that if in addition to smoothness and strong convexity assumptions, the objective function $f$ has  a Lipschitz continuous Hessian, then the iterates generated by Newton's method converge to the optimal solution at a quadratic rate in a local neighborhood of the optimal solution; see \cite[Chapter 9]{boyd04}. A similar result has been established for the case that the objective function is self-concordant \cite{nesterov2004introductory}. Despite the fact that the quadratic convergence rate of Newton's method holds only in a local neighborhood of the optimal solution, it could reduce the overall number of iterations significantly as it is substantially faster than the linear rate of first-order methods. The fast quadratic convergence rate of Newton's method, however, does not come for free. Implementation of Newton's method requires solving a linear system at each iteration with the matrix defined by the objective function Hessian $\nabla^2 f(x)$. As a result, the computational cost of implementing Newton's method in high-dimensional problems is prohibitive, as it could be $\mathcal{O}(d^3)$, unlike first-order methods that have a  per iteration cost of $\mathcal{O}(d)$.

Quasi-Newton algorithms are quite popular since they serve as a middle ground between first-order methods and Newton-type algorithms. They improve the linear convergence rate of first-order methods and achieve a local superlinear rate,  while their computational cost per iteration is $\mathcal{O}(d^2)$ instead of $\mathcal{O}(d^3)$ of Newton's method. The main idea of quasi-Newton methods is to approximate the step of Newton's method without computing the objective function Hessian $\nabla^2 f(x)$ or its inverse $\nabla^2 f(x)^{-1}$ at every iteration \cite[Chapter 6]{nocedal2006numerical}. To be more specific, quasi-Newton methods aim at approximating the curvature of the objective function by using only first-order information of the function, i.e., its gradients $\nabla f(x)$; see Section~\ref{sec:QN_methods} for more details. There are several different approaches for approximating the objective function Hessian and its inverse using first-order information, which leads to different quasi-Newton updates, but perhaps the most popular quasi-Newton algorithms are the Symmetric Rank-One (SR1) method \cite{conn1991convergence}, the Broyden method \cite{broyden1965class,Broyden,gay1979some}, the Davidon-Fletcher-Powell (DFP) method \cite{davidon1959variable,fletcher1963rapidly}, the Broyden-Fletcher-Goldfarb-Shanno (BFGS) method \cite{broyden1970convergence,fletcher1970new,goldfarb1970family,shanno1970conditioning}, and the limited-memory BFGS (L-BFGS) method \cite{nocedal1980updating,liu1989limited}. 

As mentioned earlier, a major advantage of quasi-Newton methods is their asymptotic local superlinear convergence rate. More precisely, we state that the sequence $\{x_k\}$ converges to the optimal solution $x_*$ superlinearly when the ratio between the distance to the optimal solution at time $k+1$ and $k$ approaches zero as $k$ approaches infinity, i.e.,
$$
\lim_{k\to\infty } \frac{\|x_{k+1} - x_*\|}{\|x_k - x_*\|} =0.
$$
For various settings, this superlinear convergence result has been established for a large class of quasi-Newton methods, including the Broyden method \cite{broyden1970convergence,Broyden,more1976global}, the DFP method \cite{powell1971convergence,Broyden,dennis1974characterization}, the BFGS method \cite{Broyden,dennis1974characterization,byrd1987global,gao2019quasi}, and several other variants of these algorithms \cite{griewank1982local,dennis1989convergence,yuan1991modified,al1998global,li1999globally,yabe2007local,mokhtari2017iqn}. Although this result is promising and lies between the linear rate of first-order methods and the quadratic rate of Newton's method, it only holds asymptotically and does not characterize an explicit upper bound on the error of quasi-Newton methods after a finite number of iterations. As a result, the overall complexity of quasi-Newton methods for achieving an $\epsilon$-accurate solution, i.e., $\|x_k - x_*\|\leq \epsilon$, cannot be explicitly characterized. Hence, it is essential to establish a non-asymptotic convergence rate for quasi-Newton methods, which is the main goal of this paper.

In this paper, we show that if the initial iterate is  close to the optimal solution and the initial Hessian approximation error is sufficiently small, then the iterates of the convex Broyden class including both the DFP and BFGS methods converge to the optimal solution at a superlinear rate of $(1/k)^{k/2}$. We further show that our theoretical result suggests a trade-off between the size of the superlinear convergence neighborhood and the rate of superlinear convergence. In other words, one can improve the numerical constant in the above rate at the cost of reducing the radius of the neighborhood in which DFP and BFGS converge superlinearly. We believe that our theoretical guarantee provides one of the first non-asymptotic results for the superlinear convergence rate of BFGS and DFP. 

\noindent \textbf{Related Work.}
In a recent work \cite{rodomanov2020greedy}, the authors studied the non-asymptotic analysis of a class of \textit{greedy} quasi-Newton methods that are based on the update rule of the Broyden family and use a greedily selected basis vectors for updating Hessian approximations. In particular, they show a superlinear convergence rate of $(1-\frac{\mu}{dL})^{k^2/2}(\frac{dL}{\mu})^k$ for this class of algorithms. However, greedy quasi-Newton methods are more computationally costly than standard quasi-Newton methods, as they require computing a greedily selected basis vector at each iteration. It is worth noting that such computation requires access to  additional information beyond the objective function gradient, e.g., the diagonal components of the Hessian.
Also, two recent concurrent papers  study the non-asymptotic superlinear convergence rate of the DFP and BFGS methods \cite{rodomanov2020rates,rodomanov2020ratesnew}. In \cite{rodomanov2020rates}, the authors show that when the objective function is smooth, strongly convex, and strongly self-concordant, the iterates of BFGS and DFP, in a local neighborhood of the optimal solution, achieve the superlinear convergence rate of $(\frac{dL}{\mu k})^{k/2}$ and $(\frac{dL^2}{\mu^2 k})^{k/2}$, respectively. In their follow-up paper \cite{rodomanov2020ratesnew}, they improve the superlinear convergence results to $[e^{\frac{d}{k}\ln{\frac{L}{\mu}}} - 1]^{k/2}$ and $[\frac{L}{\mu}(e^{\frac{d}{k}\ln{\frac{L}{\mu}}} - 1)]^{k/2}$, respectively. 
We would like to highlight that the proof techniques, assumptions, and final theoretical results of \cite{rodomanov2020rates,rodomanov2020ratesnew} and our paper are different and derived independently. The major difference in the analysis is that in \cite{rodomanov2020rates,rodomanov2020ratesnew}, the authors use a potential function related to the trace and the logarithm of the determinant of the Hessian approximation matrix, while we use a Frobenius norm potential function. In addition, our convergence rates for both DFP and BFGS are independent of the problem dimension~$d$. Nevertheless, in our results, the neighborhood of superlinear convergence depends on $d$. Moreover, to derive our results we consider two settings where in the first case the objective function is strongly convex, smooth, and has a Lipschitz continuous Hessain at the optimal solution, and in the second setting the function is self-concordant. Both of these settings are more general than the setting in \cite{rodomanov2020rates,rodomanov2020ratesnew}, which requires the objective function to be strongly convex, smooth, and strongly self-concordant.

\noindent \textbf{Outline.} In Section \ref{sec:QN_methods}, we discuss the Broyden class of quasi-Newton methods, DFP and BFGS. In Section \ref{sec:preliminaries}, we mention our assumptions, notations as well as some general technical lemmas. Then, in Section~\ref{sec:main_results}, we present the main theoretical results of our paper on the non-asymptotic superlinear convergence of DFP and BFGS for the setting that the objective function is strongly convex, smooth, and its Hessian is Lipschitz continuous at the optimal solution. In Section~\ref{sec:self-concordance}, we extend our theoretical results to the class of self-concordant functions, by exploiting the proof techniques developed in Section~\ref{sec:main_results}. In Section~\ref{sec:discussions}, we provide a detailed discussion on the advantages and drawbacks of our theoretical results and compare them with some concurrent works. In Section~\ref{sec:experiments}, we numerically evaluate the performance of DFP and BFGS on several datasets and compare their convergence rates with our theoretical bounds.  Finally, in Section~\ref{sec:conclusion}, we close the paper with some concluding remarks.

\noindent \textbf{Notation.}
For vector $v\in \mathbb{R}^{d}$, its Euclidean norm ($l$-2 norm) is denoted by $\|v\|$.
We denote the Frobenius norm of matrix $A \in \mathbb{R}^{d \times d}$ as $\|A\|_F = \sqrt{\sum_{i = 1}^d\sum_{j = 1}^d A_{ij}^2}$ and its induced $2$-norm is denoted by $\|A\| = \max_{\|v\|=1}\|Av\|$. The trace of matrix $A$, which is the sum of its diagonal elements, is denoted by $\mathrm{Tr}\left(A\right)$.  For any two symmetric matrices $A, B \in \mathbb{R}^{d \times d}$, we denote that $A \preceq B$ if and only if $B - A$ is a symmetric positive-definite matrix. 

\section{Quasi-Newton Methods}\label{sec:QN_methods}

In this section, we review standard quasi-Newton methods, and, in particular, we discuss the updates  of the DFP and BFGS algorithms. Consider a time index $k$, a step size $\eta_k$, and a positive-definite matrix $B_k$ to define a generic descent algorithm through the iteration
\begin{equation}\label{QN_update}
    x_{k+1} = x_k - \eta_k B_k^{-1}\nabla f(x_k).
\end{equation}
Note that if we simply replace $B_k$ by the identity matrix $I$, we recover the update of gradient descent, and if we replace it by the objective function Hessian $\nabla^2 f(x_k)$, we obtain the update of Newton's method. The main goal of quasi-Newton methods is to find a symmetric positive-definite matrix $B_k$ using only first-order information such that $B_k$ is close to the Hessian $\nabla^2 f(x_k)$. Note that the step size $\eta_k$ is often computed according to a line search routine for the global convergence of quasi-Newton methods. Our focus in this paper, however, is on the local convergence of quasi-Newton methods, which requires the unit step size $\eta_k=1$. Hence, in the rest of the paper, we assume that the iterate $x_k$ is sufficiently close to the optimal solution $x_*$ and the step size is $\eta_k = 1$.

In most quasi-Newton methods, the function's curvature is approximated in a way that it satisfies the \textit{secant condition}. To better explain this property, let us first define the variable difference $s_k$ and gradient difference $y_k$ as
\begin{equation}\label{scant_condition}
      s_k = x_{k+1} - x_k,  \quad \text{and} \quad  
      y_k = \nabla f(x_{k+1}) - \nabla f(x_k).
\end{equation}
The goal is to find a matrix $B_{k+1}$ that satisfies the secant condition $ B_{k+1} s_k = y_k$. The rationale for satisfying the secant condition is that the Hessian $\nabla^2 f(x_k)$ approximately satisfies this condition when $x_{k+1}$ and $x_k$ are close to each other, e.g., they are both close to the optimal solution $x_*$. 
However, the secant condition alone is not sufficient to specify $B_{k+1}$. To resolve this indeterminacy, different quasi-Newton algorithms consider different additional  conditions. One common constraint is to enforce the Hessian approximation (or its inverse) at time $k+1$ to be close to the one computed at time $k$. This is a reasonable extra condition as we expect the Hessian (or its inverse) evaluated at $x_{k+1}$ to be close to the one computed at $x_k$. 

In the DFP method, we enforce the proximity condition on Hessian approximations $B_k$ and $B_{k+1}$. Basically, we aim to find the closest positive-definite matrix to $B_k$ (in some weighted matrix norm) that satisfies the secant condition; see Chapter 6 of \cite{nocedal2006numerical} for more details. The update of the Hessian approximation matrices of DFP is given by
\begin{equation}\label{DFP_Hessian_update}
    B^{DFP}_{k+1} = \left(I-\frac{y_k s_k^\top}{y_k^\top s_k}\right) B_k \left(I-\frac{ s_ky_k^\top}{s_k^\top y_k}\right) +\frac{y_k y_k^\top}{y_k^\top s_k}.
\end{equation}
Since implementation of the update in \eqref{QN_update} requires access to the inverse of the Hessian approximation, it is essential to derive an explicit update for the Hessian inverse approximation to avoid the cost of inverting a matrix at each iteration. If we define $H_k$ as the inverse of $B_k$, i.e., $H_k=B_k^{-1}$, using the Sherman-Morrison-Woodbury formula, one can write 
\begin{equation}\label{DFP_Hessian_inverse_update}
  H^{DFP}_{k+1} = H_k - \frac{H_k y_k y_k^\top H_k}{y_k^\top H_k y_k} + \frac{s_k s_k^\top}{s_k^\top y_k}.
\end{equation}

The BFGS method can be considered as the dual of DFP. In BFGS, we also seek a positive-definite matrix that satisfies the secant condition, but instead of forcing the proximity condition on the Hessian approximation $B$, we enforce it on the Hessian inverse approximation $H$. To be more precise, we aim to find a positive-definite matrix $H_{k+1}$ that satisfies the secant condition $  s_k = H_{k+1} y_k$ and is the closest matrix (in some weighted norm) to the previous Hessian inverse approximation $H_k$. The update of the Hessian inverse approximation matrices of BFGS is given by,
\begin{equation}\label{BFGS_Hessian_inverse_update}
    H^{BFGS}_{k+1}= \left(I-\frac{s_k y_k^\top}{y_k^\top s_k}\right) H_k \left(I-\frac{ y_k s_k^\top}{s_k^\top y_k}\right) +\frac{s_k s_k^\top}{y_k^\top s_k}.
\end{equation}
Similarly, by the Sherman-Morrison-Woodbury formula, the update of BFGS method for the Hessian approximation matrices is given by,
\begin{equation}\label{BFGS_Hessian_update}
    B^{BFGS}_{k+1} = B_k - \frac{B_k s_k s_k^\top B_k}{s_k^\top B_k s_k} + \frac{y_k y_k^\top}{s_k^\top y_k}.
\end{equation}
Note that both DFP and BFGS  belong to a more general class of quasi-Newton methods called the Broyden class. The Hessian approximation $B_{k+1}$ of the Broyden class is defined as
\begin{equation}\label{Broyden_Hessian_update}
    B_{k+1} = \phi_k B^{DFP}_{k+1} + (1 - \phi_k) B^{BFGS}_{k+1},
\end{equation}
and the Hessian inverse approximation  is defined as
\begin{equation}\label{Broyden_Hessian_inverse_update}
    H_{k+1} = (1 - \psi_k) H^{DFP}_{k+1} + \psi_k H^{BFGS}_{k+1},
\end{equation}
where $\phi_k, \psi_k \in \mathbb{R}$. In this paper, we only focus on the convex class of Broyden quasi-Newton methods, where $\phi_k, \psi_k \in [0, 1]$. The steps of this class of methods are summarized in Algorithm~\ref{algo_broyden}. In fact, in Algorithm~\ref{algo_broyden}, if we set $\psi_k = 0$, we recover DFP, and if we set $\psi_k = 1$, we recover BFGS. It is worth noting that the cost of  computing the descent direction $H_k \nabla f(x_k)$  for this class of quasi-Newton methods is of $\mathcal{O}(d^2)$, which improves $\mathcal{O}(d^3)$ per iteration cost of Newton's method.

\begin{algorithm}[t]
\caption{The convex Broyden class of quasi-Newton methods}\label{algo_broyden} 
\begin{algorithmic}[1] 
{\REQUIRE Initial iterate  $x_0$ and initial Hessian inverse approximation $H_0$. 
\FOR {$k=0,1,2,\ldots$}
    \STATE Update the variable: $x_{k+1} = x_k - H_k \nabla f(x_k)$;
    \STATE Compute the variable difference $s_k = x_{k+1} - x_k$;
    \IF{$s_k = 0$}
        \STATE Terminate the algorithm
    \ELSE
    \STATE Compute the gradient difference $y_k = \nabla f(x_{k+1}) - \nabla f(x_k) $;
    %\STATE Choose the parameter $\psi_k \in [0, 1]$;
    \STATE Update the Hessian inverse approximation $H_{k+1} = (1 - \psi_k) H^{DFP}_{k+1} + \psi_k H^{BFGS}_{k+1}$ according to \eqref{DFP_Hessian_inverse_update} and \eqref{BFGS_Hessian_inverse_update};
    \ENDIF
\ENDFOR}
\end{algorithmic}\end{algorithm}

\begin{remark}
Note that when $s_k = 0$, we have $\nabla{f(x_k)} = 0$ from \eqref{QN_update} and thus $x_k = x_*$. Hence, in our implementation and analysis we assume $s_k \neq 0$. Moreover, in both considered settings, the objective function is at least strictly convex. As a result, if $s_k \neq 0$, then it follows that $y_k \neq 0$ and $s_k^\top y_k > 0$. This observation shows that the updates of BFGS and DFP are well-defined. Finally, it is well-known that for the convex class of  Broyden methods if $B_{k}$ is symmetric positive-definite and $s_k^\top y_k > 0$, then $B_{k+1}$ is also symmetric positive-definite \cite{nocedal2006numerical}. In Algorithm~\ref{algo_broyden}, we assume that the initial Hessian approximation $B_0$ is symmetric positive-definite, and, hence, all Hessian approximation matrices $B_k$ and their inverse matrices $H_k$ are symmetric positive-definite.
\end{remark}

\section{Preliminaries}\label{sec:preliminaries}

In this section, we first specify the required assumptions for our results in Section~\ref{sec:main_results} and introduce some notations to simplify our expressions. Moreover, we present some intermediate lemmas that will be use later in Section~\ref{sec:main_results} to prove our main theoretical results for the setting that the objective function is strongly convex, smooth, and its Hessian is Lipschitz continuous at the optimal solution. In Section~\ref{sec:self-concordance}, we will use a subset of these intermediate results to extend our analysis to the class of self-concordant functions. 

\subsection{Assumptions}
We formally state the required  assumptions for establishing our theoretical results in Section~\ref{sec:main_results}.
\begin{assumption}\label{ass_str_cvx_smooth}
The objective function $f(x)$ is twice-differentiable. Moreover, it is strongly convex with parameter $\mu > 0$ and its gradient $\nabla f$ is Lipschitz continuous with parameter $L > 0$. Hence, 
\begin{equation}
\mu\| x - y \| \leq \|\nabla{f(x)} - \nabla{f(y)}\| \leq L\| x - y\|, \quad \forall x, y \in \mathbb{R}^{d}.
\end{equation}
\end{assumption}
As $f$ is twice-differentiable, Assumption~\ref{ass_str_cvx_smooth} implies  that the eigenvalues of the Hessian are larger than  $\mu$ and smaller than $L$, i.e., $\mu I \preceq \nabla^{2}{f(x)} \preceq LI, \forall x \in \mathbb{R}^{d}$.

\begin{assumption}\label{ass_Hess_lip}
The Hessian $\nabla^2 f(x)$ satisfies the following condition for some constant $M\geq 0$, 
\begin{equation}
\|\nabla^{2}{f(x)} - \nabla^{2}{f(x_*)}\| \leq M\| x - x_{*}\|, \quad \forall x \in \mathbb{R}^{d}.
\end{equation}
\end{assumption}

The condition in Assumption~\ref{ass_Hess_lip} is common for analyzing second-order methods as we require a regularity condition on the objective function Hessian. In fact,  Assumption~\ref{ass_Hess_lip} is one of the least strict conditions required for the analysis of second-order type methods as it requires Lipschitz continuity of the Hessian only at (near) the optimal solution. This  condition is, indeed, weaker than assuming that the Hessian is Lipschitz continuous everywhere. Note that for the class of strongly convex and smooth functions, the strongly self-concordance assumption required in \cite{rodomanov2020rates,rodomanov2020ratesnew} is equivalent to assuming that the Hessian is Lipschitz continuous everywhere. Hence, the condition in Assumption~\ref{ass_Hess_lip} is also weaker than the one in \cite{rodomanov2020rates,rodomanov2020ratesnew}.  Assumption~\ref{ass_Hess_lip} leads to the following corollary. 

\begin{corollary}\label{corollary}
If the condition in Assumption~\ref{ass_Hess_lip} holds, then for all $ x, y \in \mathbb{R}^{d}$, we have
\begin{equation}\label{corollary_1}
\|\nabla{f(x)} - \nabla{f(y)} - \nabla^{2}{f(x_*)}(x - y)\| \leq \frac{M}{2}\|x - y\|(\|x - x_{*}\| + \|y - x_{*}\|).
\end{equation}
\end{corollary}

\begin{proof}
Check Appendix \ref{sec:proof_corollary}. 
\end{proof}

\begin{remark}
Our analysis can be extended to the case that Assumptions~\ref{ass_str_cvx_smooth} and \ref{ass_Hess_lip} only hold in a local neighborhood of the optimal solution $x_{*}$. Here, we assume they hold in $\mathbb{R}^{d}$ to simplify our proofs.
\end{remark}

\subsection{Notations} 

Next, we briefly mention some of the definitions and notations that will be used in following theorems and proofs. 
We consider $\nabla^{2}f(x_*)^{\frac{1}{2}}$ and $\nabla^{2}f(x_*)^{-\frac{1}{2}}$ as the square root of the matrices $\nabla^{2}f(x_*)$ and $\nabla^{2}f(x_*)^{-1}$, i.e., $\nabla^{2}f(x_*) = \nabla^{2}f(x_*)^{\frac{1}{2}}\nabla^{2}f(x_*)^{\frac{1}{2}}$ and $\nabla^{2}f(x_*)^{-1} = \nabla^{2}f(x_*)^{-\frac{1}{2}}\nabla^{2}f(x_*)^{-\frac{1}{2}}$. By Assumption~\ref{ass_str_cvx_smooth}, both $\nabla^{2}f(x_*)^{\frac{1}{2}}$ and $\nabla^{2}f(x_*)^{-\frac{1}{2}}$ are symmetric positive-definite. Throughout the paper, we analyze and study weighted version of the Hessian approximation $\hat{B}_k$ defined as
\begin{align}\label{main_def_1}
\hat{B}_k = \nabla^{2}f(x_*)^{-\frac{1}{2}}B_k\nabla^{2}f(x_*)^{-\frac{1}{2}}.
\end{align}
$\hat{B}_k$ is symmetric positive-definite, since $B_k$ and $\nabla^{2}f(x_*)^{-\frac{1}{2}}$ are both symmetric positive-definite. We also use $\|\hat{B}_k - I\|_F$ as the measure of closeness between $B_k$ and $\nabla^{2}f(x_*)$, which can be written as
\begin{equation}\label{main_def_2}
   \|\hat{B}_k - I\|_F=  \|\nabla^{2}f(x_*)^{-\frac{1}{2}}\left(B_k - \nabla^{2}f(x_*)\right)\nabla^{2}f(x_*)^{-\frac{1}{2}}\|_F.
\end{equation}
We further define the weighted gradient difference $\hat{y}_k$, the weighted variable difference $\hat{s}_k$, and the weighted gradient $\widehat{\nabla{f}}(x_k)$ as
\begin{align}\label{main_def_3}
\hat{y}_k = \nabla^{2}f(x_*)^{-\frac{1}{2}}y_k, \qquad \hat{s}_k = \nabla^{2}f(x_*)^{\frac{1}{2}}s_k, \qquad \widehat{\nabla{f}}(x_k) = \nabla^{2}f(x_*)^{-\frac{1}{2}}\nabla{f(x_k)}.
\end{align}
To measure closeness to the optimal solution for iterate $x_k$, we use $r_k\in \mathbb{R}^d$, $\sigma_k \in \mathbb{R}$, and $\tau_k \in \mathbb{R}$ which are formally defined as
\begin{align}\label{main_def_4}
    r_k = \nabla^{2}f(x_*)^{\frac{1}{2}}(x_k - x_*), \qquad 
    \sigma_k = \frac{M}{\mu^{\frac{3}{2}}}\|r_k\|, \qquad
    \tau_k = \max\{\sigma_k, \sigma_{k+1}\}.
\end{align}
In \eqref{main_def_4}, $\mu$ is the strong convexity parameter defined in Assumption~\ref{ass_str_cvx_smooth} and $M$ is the Lipschitz continuity parameter of the Hessian at the optimal solution defined in Assumption~\ref{ass_Hess_lip}. In our analysis, we also use the average Hessian $J_k$ and its weighted version $\hat{J}_k$ that are formally defined as
\begin{align}\label{main_def_5}
    J_k = \int_{0}^{1}\nabla^2{f(x_* + \alpha(x_k - x_*))}d\alpha, \qquad 
    \hat{J}_k = \nabla^{2}f(x_*)^{-\frac{1}{2}}J_k\nabla^{2}f(x_*)^{-\frac{1}{2}}.
\end{align}

\subsection{Intermediate Lemmas}
Next, we present some lemmas that we will later use to establish the non-asymptotic superlinear convergence of DFP and BFGS. Proofs of these lemmas are relegated to the appendix. 

\begin{lemma}\label{lemma:lemma_1}
For any matrix $A \in \mathbb{R}^{d \times d}$ and vector $u \in \mathbb{R}^{d}$ with $\|u\| = 1$, we have
\begin{equation}\label{lemma_1_1}
    \|A\|^2_F - \|(I - uu^\top)A(I - uu^\top)\|^2_F \geq \|Au\|^2.
\end{equation}
\end{lemma}

\begin{proof}
Check Appendix \ref{sec:proof_of_lemma_1}. 
\end{proof}

\begin{lemma}\label{lemma:lemma_2}
For any matrices $A, B \in \mathbb{R}^{d \times d}$, we have
\begin{equation}\label{lemma_2_1}
    \|AB\|_F \leq \|A\|\|B\|_F,\qquad 
    \|B^\top AB\|_F \leq \|A\|\|B\|\|B\|_F.
\end{equation}
\end{lemma}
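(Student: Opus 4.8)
The plan is to establish the two inequalities in turn, with the second built on the first. Both are really statements about the compatibility of the operator norm with the Frobenius norm, so the arguments are short and elementary.

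For the first inequality I would write $B$ in terms of its columns, $B = [b_1,\dots,b_d]$ with $b_i \in \mathbb{R}^d$, so that $AB = [Ab_1,\dots,Ab_d]$. By the definition of the Frobenius norm and of the operator norm $\|A\| = \max_{\|v\|=1}\|Av\|$,
\[
\|AB\|_F^2 = \sum_{i=1}^d \|Ab_i\|^2 \le \sum_{i=1}^d \|A\|^2\|b_i\|^2 = \|A\|^2\sum_{i=1}^d \|b_i\|^2 = \|A\|^2\|B\|_F^2,
\]
and taking square roots gives $\|AB\|_F \le \|A\|\,\|B\|_F$. Note that this step uses neither symmetry nor positive definiteness of $A$.

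For the second inequality I would exploit that $A$, being symmetric positive definite, has a symmetric positive definite square root $A^{1/2}$ with $A = A^{1/2}A^{1/2}$ and $\|A^{1/2}\|^2 = \|A\|$. Setting $C := A^{1/2}B$ we have $B^\top A B = C^\top C$. Applying the first inequality to the product $C^\top(C)$ yields $\|C^\top C\|_F \le \|C^\top\|\,\|C\|_F = \|C\|\,\|C\|_F$, and since the operator norm is dominated by the Frobenius norm ($\|C\| \le \|C\|_F$, comparing the largest singular value to the $\ell_2$ norm of all singular values) this is at most $\|C\|_F^2$. A second application of the first inequality gives $\|C\|_F = \|A^{1/2}B\|_F \le \|A^{1/2}\|\,\|B\|_F$, hence $\|C\|_F^2 \le \|A^{1/2}\|^2\|B\|_F^2 = \|A\|\,\|B\|_F^2$. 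Chaining these bounds gives $\|B^\top A B\|_F \le \|A\|\,\|B\|_F^2$.

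I do not anticipate any genuine obstacle; the only points deserving a word of care are the identity $\|A^{1/2}\|^2 = \|A\|$ (immediate since $A$ and $A^{1/2}$ are simultaneously diagonalizable with eigenvalues related by a square root) and the bound $\|C\| \le \|C\|_F$. In fact one can bypass the square root entirely: applying the first inequality to $B^\top(AB)$ gives $\|B^\top A B\|_F \le \|B\|\,\|AB\|_F \le \|B\|\,\|A\|\,\|B\|_F \le \|A\|\,\|B\|_F^2$, again using $\|B\| \le \|B\|_F$. I would present whichever of the two routes reads more cleanly in context, but either way the lemma reduces to the column-wise computation above together with the elementary norm comparison.
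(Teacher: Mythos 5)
Your proof is correct, and it takes a genuinely different and somewhat more elementary route than the paper's. The paper establishes both inequalities by diagonalizing $A = U^\top D U$ and working entirely with trace identities: for instance, for the first inequality it writes $\|AB\|_F^2 = \mathrm{Tr}(D^2 U B B^\top U^\top)$ and then pulls out $\max_i D_{ii}^2 = \|A\|^2$, and for the second it expands $\|B^\top A B\|_F^2$ as a chain of traces, peels off $\|A\|$ twice, and finally invokes Frobenius-norm submultiplicativity $\|B^\top B\|_F \leq \|B^\top\|_F\|B\|_F$. You instead obtain the first inequality by the column-wise computation $\|AB\|_F^2 = \sum_i\|Ab_i\|^2 \leq \|A\|^2\sum_i\|b_i\|^2$, which is shorter, entirely transparent, and — as you rightly point out — shows that neither symmetry nor positive definiteness of $A$ is actually used there, so the bound holds for any left factor. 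For the second inequality your alternative route ($\|B^\top A B\|_F \leq \|B^\top\|\,\|AB\|_F \leq \|B\|\,\|A\|\,\|B\|_F \leq \|A\|\,\|B\|_F^2$, using $\|B\|\leq\|B\|_F$) is especially clean because it needs no spectral decomposition or square root at all; it reuses the generalized first inequality twice and replaces the paper's appeal to Frobenius submultiplicativity with the equally standard comparison of the operator norm to the Frobenius norm. Both approaches buy the same result; yours isolates the genuinely needed hypotheses better, while the paper's trace formulation keeps everything inside a single algebraic framework.
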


\begin{proof}
Check Appendix \ref{sec:proof_of_lemma_2}. 
\end{proof}

The results in Lemma~\ref{lemma:lemma_1} and Lemma~\ref{lemma:lemma_2} hold for arbitrary matrices. The next lemma focuses on some properties of the weighted average Hessian $\hat{J}_k$ under Assumptions~\ref{ass_str_cvx_smooth} and \ref{ass_Hess_lip}.

\begin{lemma}\label{lemma:lemma_3}
\noindent Recall the definition of $\sigma_k$ in \eqref{main_def_4} and $\hat{J}_k$ in \eqref{main_def_5}. If Assumptions~\ref{ass_str_cvx_smooth} and \ref{ass_Hess_lip} hold, then the following inequalities hold for all $k \geq 0$,
\begin{equation}\label{lemma_3_1}
    \frac{1}{1 + \frac{\sigma_k}{2}}I \preceq \hat{J}_k \preceq (1 + \frac{\sigma_k}{2})I.
\end{equation}
\end{lemma}

\begin{proof}
Check Appendix \ref{sec:proof_of_lemma_3}.
\end{proof}

In the following lemma, we establish some bounds that depend on the weighted gradient difference~$\hat{y}_k$ and the weighted variable difference $\hat{s}_k$.

\begin{lemma}\label{lemma:lemma_4}
Recall the definitions in \eqref{main_def_1} - \eqref{main_def_4}. If Assumptions~\ref{ass_str_cvx_smooth} and \ref{ass_Hess_lip} hold, then the following inequalities hold for all $k \geq 0$,
\begin{equation}\label{lemma_4_1}
    \|\hat{y}_k - \hat{s}_k\| \leq \tau_k\|\hat{s}_k\|,
\end{equation}
\begin{equation}\label{lemma_4_2}
    (1 - \tau_k)\|\hat{s}_k\|^2 \leq \hat{s}_k^\top\hat{y}_k \leq (1 + \tau_k)\|\hat{s}_k\|^2,
\end{equation}
\begin{equation}\label{lemma_4_3}
    (1 - \tau_k)\|\hat{s}_k\| \leq \|\hat{y}_k\| \leq (1 + \tau_k)\|\hat{s}_k\|,
\end{equation}
\begin{equation}\label{lemma_4_4}
    \|\widehat{\nabla{f}}(x_k) - r_k\| \leq \frac{\sigma_k}{2}\|r_k\|.
\end{equation}
\end{lemma}

\begin{proof}
Check Appendix \ref{sec:proof_of_lemma_4}.
\end{proof}

\section{Main Theoretical Results}\label{sec:main_results}

In this section, we characterize the non-asymptotic superlinear convergence of the Broyden class of quasi-Newton methods, when Assumptions~\ref{ass_str_cvx_smooth} and \ref{ass_Hess_lip} hold. In Section~\ref{sec:main_result_hess_appr}, we first establish a crucial proposition which characterizes the error of Hessian approximation for this class of quasi-Newton methods. Then, in Section~\ref{sec:main_result_linear_convergence}, we leverage this result to show that the iterates of this class of algorithms converge at least linearly to the optimal solution, if the initial distance to the optimal solution and the initial Hessian approximation error are sufficiently small. Finally, we use these intermediate results in Section~\ref{sec:main_result_superlinear_convergence} to prove that the iterates of the convex Broyden class, including both DFP and BFGS, converge to the optimal solution at a superlinear rate of $(1/k)^{k/2}$. Note that in Algorithm~\ref{algo_broyden} we use the Hessian inverse approximation matrix $H_k$ to describe the algorithm, but in our analysis we will study the behavior of the Hessian approximation matrix $B_k$.

\subsection{Hessian approximation error: Frobenius norm potential function}\label{sec:main_result_hess_appr}

Next, we use the Frobenius norm of the Hessian approximation error $\|\hat{B}_{k} - I\|_F$ as the potential function in our analysis. Specifically, we will use the results of  Lemma~\ref{lemma:lemma_1}, Lemma~\ref{lemma:lemma_2}, and Lemma~\ref{lemma:lemma_4} to study the dynamic of the  Hessian approximation error $\|\hat{B}_{k} - I\|_F$ for both DFP and BFGS. First, start with the DFP method.

\begin{lemma}\label{lemma:lemma_5}
Consider the update of DFP in \eqref{DFP_Hessian_update} and recall the definition of $\tau_k$ in \eqref{main_def_4}. Suppose that for some $\delta > 0$ and some $k \geq 0$, we have that $\tau_k < 1$ and $\|\hat{B}_k - I\|_{F} \leq \delta$. Then, the matrix $B^{DFP}_{k+1}$ generated by the DFP update  satisfies the following inequality
\begin{equation}\label{claim_Lemma_5}
    \|\hat{B}^{DFP}_{k+1} - I\|_F \leq \|\hat{B}_k - I\|_{F} - \frac{\|(\hat{B}_k - I)\hat{s}_k\|^2}{2\delta\|\hat{s}_k\|^2} + W_k\tau_k,
\end{equation}
where $W_k = \|\hat{B}_k\|\frac{4}{(1 - \tau_k)^2} + \frac{3 + \tau_k}{1 - \tau_k}$.
\end{lemma}

\begin{proof}
The proof and conclusion of this lemma are similar to the ones in Lemma 3.2 in \cite{yabe2007local}, except the value of parameter $W_k$.  This difference comes from the fact that \cite{yabe2007local}  analyzed the modified DFP update, while we consider the standard DFP method. Recall the DFP update in \eqref{DFP_Hessian_update} and multiply both sides of that expression by the matrix $\nabla^{2}f(x_*)^{-\frac{1}{2}}$ from left and right to obtain 
\begin{equation}\label{proof_lemma_5_1}
    \hat{B}^{DFP}_{k+1} = \left(I - \frac{\hat{y}_k\hat{s}_k^\top}{\hat{y}_k^\top\hat{s}_k}\right)\hat{B}_{k}\left(I - \frac{\hat{s}_k\hat{y}_k^\top}{\hat{s}_k^\top\hat{y}_k}\right) +\frac{\hat{y}_k\hat{y}_k^\top}{\hat{y}_k^\top\hat{s}_k},
\end{equation}
where we used the fact that $s_k^\top y_k = s_k^\top \nabla^{2}f(x_*)^{\frac{1}{2}}\nabla^{2}f(x_*)^{-\frac{1}{2}}y_k = \hat{s}_k^\top \hat{y}_k$. To simplify the proof, we use the following notations:
\begin{equation}\label{proof_lemma_5_2}
    B = \hat{B}_{k}, \quad B_+ = \hat{B}^{DFP}_{k+1}, \quad s = \hat{s}_k, \quad y = \hat{y}_k, \quad \tau = \tau_k, \quad P = I - \frac{ss^\top}{\|s\|^2}, \quad Q = \frac{ss^\top}{\|s\|^2} - \frac{sy^\top}{s^\top y}.
\end{equation}
Hence, \eqref{proof_lemma_5_1} is equivalent to
$$
B_+ = \left(I - \frac{ys^\top}{s^\top y}\right)B\left(I - \frac{sy^\top}{s^\top y}\right) + \frac{yy^\top}{s^\top y}.
$$
Moreover, we can express $ B_+ - I$ as
\begin{equation}\label{proof_lemma_5_3}
\begin{split}
    B_+ - I & = (P + Q^\top)B(P + Q) - I + \frac{yy^\top}{s^\top y}\\
    & = PBP + Q^\top BP + PBQ + Q^\top BQ - I + \frac{yy^\top}{s^\top y}\\
    & = P(B - I)P + P^2 - I + \frac{yy^\top}{s^\top y} + Q^\top BP + PBQ + Q^\top BQ.
\end{split}
\end{equation}
Notice that $P^2 = P$ and $P = P^\top$. Thus, \eqref{proof_lemma_5_3} can be simplified as 
$$
    B_+ - I = D + E + G^\top + G + H,
$$
where
$$
    D = P(B - I)P, \quad E = \frac{yy^\top}{s^\top y} - \frac{ss^\top}{\|s\|^2}, \quad G = PBQ, \quad H = Q^\top BQ.
$$
Next, we proceed to upper bound $ \|B_+ - I \|_F$. To do so, we derive upper bounds on the Frobenius norm of matrices $D$, $E$, $G$ and $H$. We start by $\|D\|_F$. If we set $u=s/\|s\|$ and $A=B-I$ in Lemma~\ref{lemma:lemma_1}, we obtain that
\begin{equation}\label{proof_lemma_5_4}
    \frac{\|(B - I)s\|^2}{\|s\|^2} \leq \|B - I\|^2_F - \|D\|^2_F,
\end{equation}
which implies $\|B - I\|^2_F - \|D\|^2_F \geq 0$. Moreover, using the fact that  $a^2 - b^2 \leq 2a(a - b)$  we can write
\begin{equation}\label{proof_lemma_5_5}
    \|B - I\|^2_F - \|D\|^2_F \leq 2\|B - I\|_F(\|B - I\|_F - \|D\|_F) \leq 2\delta(\|B - I\|_F - \|D\|_F),
\end{equation}
where the second inequality follows from the fact that $\|B - I\|^2_F - \|D\|^2_F \geq 0$ and the assumption that  $\|B - I\|_F \leq \delta$. Next, if we replace the right hand side of 
\eqref{proof_lemma_5_4} by its upper bound in  \eqref{proof_lemma_5_5} and massage the resulted expression, we obtain that 
\begin{equation}\label{proof_lemma_5_6}
    \|D\|_F \leq \|B - I\|_F - \frac{\|(B - I)s\|^2}{2\delta\|s\|^2},
\end{equation}
which provides an upper bound on $\|D\|_F$. To derive upper bounds for $\|E\|_F$, $\|G\|_F$ and $\|H\|_F$, we first need to find an upper bound for $\|Q\|_F$, where $Q$ is defined in \eqref{proof_lemma_5_2}. Note that 
$$
\|Q\|_F
=\left\|\frac{ss^\top}{\|s\|^2} - \frac{sy^\top}{s^\top y}\right\|_F
= \left\|\frac{ss^\top}{\|s\|^2} -\frac{sy^\top}{\|s\|^2}+\frac{sy^\top}{\|s\|^2}- \frac{sy^\top}{s^\top y}\right\|_F
\leq \left\|\frac{ss^\top}{\|s\|^2} - \frac{sy^\top}{\|s\|^2}\right\|_F + \left\|\frac{sy^\top}{\|s\|^2} - \frac{sy^\top}{s^\top y}\right\|_F,
$$
where the first equality holds by the definition of $Q$, the second equality is obtained by adding and subtracting $\frac{sy^\top}{\|s\|^2}$, and the inequality holds due to the triangle inequality. We can further simplify the right hand side as
\begin{equation}\label{proof_lemma_5_7}
    \|Q\|_F  \leq \frac{\|s(s - y)^\top\|_F}{\|s\|^2} + \frac{|s^T(s - y)|\|sy^\top\|_F}{\|s\|^2s^\top y} \leq \frac{\|y - s\|}{\|s\|} + \frac{\|y - s\|\|y\|}{s^\top y} \leq \tau + \frac{\tau(1 + \tau)\|s\|^2}{(1 - \tau)\|s\|^2} = \frac{2\tau}{1 - \tau},
\end{equation}
where the second inequality holds using the Cauchy–Schwarz inequality  and the fact that   $\|ab^\top\|_F = \|a\|\|b\|$ for $a, b \in \mathbb{R}^d$, and the last inequality holds due to the results in \eqref{lemma_4_1}, \eqref{lemma_4_2}, and \eqref{lemma_4_3}. 

Next using the upper bound in \eqref{proof_lemma_5_7} on $\|Q\|_F$ we derive an upper bound on $\|E\|_F$. Note that 
$$
\|E\|_F  = \left\|\frac{yy^\top}{s^\top y} - \frac{ss^\top}{\|s\|^2}\right\|_F 
= \left\|\frac{yy^\top}{s^\top y} - \frac{sy^\top}{s^\top y}+\frac{sy^\top}{s^\top y} - \frac{ss^\top}{\|s\|^2}\right\|_F
\leq \left\|\frac{yy^\top}{s^\top y} - \frac{sy^\top}{s^\top y}\right\|_F + \left\|\frac{sy^\top}{s^\top y} - \frac{ss^\top}{\|s\|^2}\right\|_F,
$$
where we used the triangle inequality in the last step. Using the definition of $Q$ we can show that 
\begin{equation}\label{proof_lemma_5_8}
    \|E\|_F 
    \leq \frac{\|(y - s)y^\top\|_F}{s^\top y} + \|Q\|_F \leq \frac{\|y - s\|\|y\|}{s^\top y} + \frac{2\tau}{1 - \tau}\leq \frac{\tau(1 + \tau)\|s\|^2}{(1 - \tau)\|s\|^2} + \frac{2\tau}{1 - \tau} = \frac{3 + \tau}{1 - \tau}\tau,
\end{equation}
where for the second inequality we use \eqref{proof_lemma_5_7} and  $\|ab^\top\|_F = \|a\|\|b\|$, and for the third inequality we use the results in  \eqref{lemma_4_1}, \eqref{lemma_4_2}, and \eqref{lemma_4_3}.

We proceed to derive an upper bound for $\|G\|_F$. Note that $0 \preceq P \preceq I$ and thus $\|P\| \leq 1$. Using this observation, \eqref{proof_lemma_5_7} and the first inequality in \eqref{lemma_2_1}, we can show that $\|G\|_F$ is bounded above by
\begin{equation}\label{proof_lemma_5_9}
    \|G\|_F = \|PBR\|_F \leq \|PB\|\|Q\|_F \leq \|P\|\|B\|\|Q\|_F \leq \|B\|\|Q\|_F \leq \|B\|\frac{2}{1 - \tau}\tau.
\end{equation}
Finally, we provide an upper bound for  $\|H\|_F$. By leveraging the second inequality in \eqref{lemma_2_1} and the fact that $\|A\| \leq \|A\|_F$ for any matrix $A \in \mathbb{R}^{d \times d}$, we can show that
\begin{equation}\label{proof_lemma_5_10}
   \|H\|_F = \|Q^\top BQ\|_F \leq \|B\|\|Q\|\|Q\|_F \leq \|B\|\|Q\|^2_F \leq \|B\|\frac{4\tau^2}{(1 - \tau)^2},
\end{equation}
where for the last inequality we used the result in \eqref{proof_lemma_5_7}.

If we replace $\|D\|_F$, $\|E\|_F$,  $\|G\|_F$, and  $\|H\|_F$ with their upper bounds in \eqref{proof_lemma_5_6}, \eqref{proof_lemma_5_8}, \eqref{proof_lemma_5_9} and \eqref{proof_lemma_5_10}, respectively, we obtain that
\begin{align*}
    \|B_+ - I\|_F & \leq \|D\|_F + \|E\|_F + 2\|G\|_F + \|H\|_F\\
    & \leq \|B - I\|_F - \frac{\|(B - I)s\|^2}{2\delta\|s\|^2} + \frac{3 + \tau}{1 - \tau}\tau + \|B\|\frac{4}{1 - \tau}\tau + \|B\|\frac{4\tau}{(1 - \tau)^2}\tau\\
    & \leq \|B - I\|_F - \frac{\|(B - I)s\|^2}{2\delta\|s\|^2} + W\tau,
\end{align*}
where $W = \|B\|\frac{4}{1 - \tau} + \|B\|\frac{4\tau}{(1 - \tau)^2} + \frac{3 + \tau}{1 - \tau} = \|B\|\frac{4}{(1 - \tau)^2} + \frac{3 + \tau}{1 - \tau}$. Considering the notations introduced in \eqref{proof_lemma_5_2}, the result in \eqref{claim_Lemma_5} follows from the above inequality and the proof is complete.
\end{proof}

The result in Lemma~\ref{lemma:lemma_5} shows how the error of Hessian approximation in DFP evolves as we run the updates. Next, we establish a similar result for the BFGS method.

\begin{lemma}\label{lemma:lemma_6}
Consider the update of BFGS in \eqref{BFGS_Hessian_update} and recall the definition of $\tau_k$ in \eqref{main_def_4}. Suppose that for some $\delta > 0$ and some $k \geq 0$, we have that $\tau_k < 1$ and $\|\hat{B}_k - I\|_{F} \leq \delta$. Then, the matrix $B^{BFGS}_{k+1}$ generated by the BFGS update satisfies the following inequality
\begin{equation}\label{sadadad}
    \|\hat{B}^{BFGS}_{k+1} - I\|_F \leq \|\hat{B}_k - I\|_F - \frac{\hat{s}_k^\top (\hat{B}_k - I) \hat{B}_k (\hat{B}_k - I) \hat{s}_k}{2\delta \hat{s}_k^\top \hat{B}_k \hat{s}_k} + V_k\tau_k,
\end{equation}
where $V_k = \frac{3 + \tau_k}{1 - \tau_k}$.
\end{lemma}

\begin{proof}
The proof of this lemma is adapted from the proof of Lemma~3.6 in \cite{li1999globally}. We should also add that our upper bound in \eqref{sadadad} improves the bound in \cite{li1999globally} as it contains an additional negative term, i.e., $- \frac{\hat{s}_k^\top (\hat{B}_k - I) \hat{B}_k (\hat{B}_k - I) \hat{s}_k}{2\delta \hat{s}_k^\top \hat{B}_k \hat{s}_k}$. Recall the BFGS update in \eqref{BFGS_Hessian_update} and multiply both sides of that expression with  $\nabla^{2}f(x_*)^{-\frac{1}{2}}$ from left and right to obtain 
\begin{equation}\label{proof_lemma_6_1}
    \hat{B}^{BFGS}_{k+1} = \hat{B}_k - \frac{\hat{B}_k \hat{s}_k \hat{s}_k^\top \hat{B}_k}{\hat{s}_k^\top \hat{B}_k \hat{s}_k} + \frac{\hat{y}_k \hat{y}_k^\top}{\hat{s}_k^\top \hat{y}_k},
\end{equation}
where we used the fact that $s_k^\top B_k s_k = s_k^\top \nabla^{2}f(x_*)^{\frac{1}{2}}\nabla^{2}f(x_*)^{-\frac{1}{2}}B_k \nabla^{2}f(x_*)^{-\frac{1}{2}}\nabla^{2}f(x_*)^{\frac{1}{2}} s_k = \hat{s}_k^\top \hat{B}_k \hat{s}_k$. To simplify the proof, we use the following notations:
\begin{equation}\label{proof_lemma_6_2}
    B = \hat{B}_{k}, \quad B_+ = \hat{B}^{BFGS}_{k+1}, \quad s = \hat{s}_k, \quad y = \hat{y}_k, \quad \tau = \tau_k.
\end{equation}
Considering these notations, the expression in  \eqref{proof_lemma_6_1} can be written as
$$
    B_+ = B - \frac{Bss^\top B}{s^\top Bs} + \frac{yy^\top}{s^\top y}.
$$
Moreover, we can show that $ B_+ - I$ is given by
$$
    B_+ - I = B - I + \frac{Bss^\top B}{s^\top Bs} + \frac{ss^\top}{\|s\|^2} + \frac{yy^\top}{s^\top y} - \frac{ss^\top}{\|s\|^2} = D+ E,
$$
where
$$
     D =  B - I + \frac{Bss^\top B}{s^\top Bs} + \frac{ss^\top}{\|s\|^2}, \qquad E = \frac{yy^\top}{s^\top y} - \frac{ss^\top}{\|s\|^2}.
$$
To establish an upper bound on $\|B_+ - I \|_F$, we find upper bounds on $\|D\|^2_F$ and $\|E\|^2_F$. Note that using the fact that $\|D\|^2_F=\mathrm{Tr}\left[DD^\top\right]$ and properties of the trace operator we can show that
\begin{equation}\label{proof_lemma_6_3}
\begin{split}
    \|D\|^2_F & = \mathrm{Tr}\left[\left(B - I - \frac{Bss^\top B}{s^\top Bs} + \frac{ss^\top}{\|s\|^2}\right)\left(B - I - \frac{Bss^\top B}{s^\top Bs} + \frac{ss^\top}{\|s\|^2}\right)^\top\right] \\
    & = \mathrm{Tr}\left[(B - I)^2 - \frac{Bss^\top B(B - I) + (B - I)Bss^\top B}{s^\top Bs} - \frac{ss^\top Bss^\top B + Bss^\top Bss^\top}{s^\top Bs\|s\|^2}\right]\\
    & \quad + \mathrm{Tr}\left[\frac{ss^\top(B - I) + (B - I)ss^\top}{\|s\|^2} + \frac{Bss^\top BBss^\top B}{(s^\top Bs)^2} + \frac{ss^\top ss^\top}{\|s\|^4} \right].\\
    & = \mathrm{Tr}\left[(B - I)^2 - \frac{Bss^\top B(B - I) + (B - I)Bss^\top B}{s^\top Bs} - \frac{ss^\top B + Bss^\top}{\|s\|^2}\right]\\
    & \quad + \mathrm{Tr}\left[\frac{ss^\top(B - I) + (B - I)ss^\top}{\|s\|^2} + \frac{\|Bs\|^2Bss^\top B}{(s^\top Bs)^2} + \frac{ss^\top}{\|s\|^2} \right].\\
\end{split}
\end{equation}
Using the fact that $\mathrm{Tr}\left(ab^\top\right) = a^\top b$ for any $a, b \in \mathbb{R}^d$ we can write the following simplifications:
$$
\mathrm{Tr}\left[\frac{Bss^\top B(B - I) + (B - I)Bss^\top B}{s^\top Bs}\right] = 2\frac{s^\top B(B - I)Bs}{s^\top Bs},
$$
$$
\mathrm{Tr}\left[(B - I)^2\right] = \|B - I\|^2_F,\qquad \mathrm{Tr}\left[\frac{Bss^\top + ss^\top B}{\|s\|^2}\right] = 2\frac{s^\top Bs}{\|s\|^2}, \qquad \mathrm{Tr}\left[\frac{ss^\top}{\|s\|^2}\right] = 1,
$$
$$
\mathrm{Tr}\left[\frac{ss^\top(B - I) + (B - I)ss^\top}{\|s\|^2}\right] = 2\frac{s^\top (B - I)s}{\|s\|^2},\qquad 
\mathrm{Tr}\left[\frac{\|Bs\|^2Bss^\top B}{(s^\top Bs)^2}\right] = \frac{\|Bs\|^4}{(s^\top Bs)^2}.
$$
Substituting the above simplifications into \eqref{proof_lemma_6_3}, we obtain that
\begin{equation}\label{proof_lemma_6_4}
\begin{split}
    \|D\|^2_F & = \|B - I\|^2_F - 2\frac{s^\top B(B - I)Bs}{s^\top Bs} - 2\frac{s^\top Bs}{\|s\|^2} + 2\frac{s^\top(B - I)s}{\|s\|^2} + \frac{\|Bs\|^4}{(s^\top Bs)^2} + 1\\
    & = \|B - I\|^2_F + \left[\left(\frac{\|Bs\|^2}{s^\top Bs}\right)^2 - \frac{s^\top B^3s}{s^\top Bs}\right] - \frac{s^\top(B - I)B(B - I)s}{s^\top Bs}.\\
\end{split}
\end{equation}
Next, we proceed to show that the second term on the right hand side of \eqref{proof_lemma_6_4}, i.e., $\left(\frac{\|Bs\|^2}{s^\top Bs}\right)^2 - \frac{s^\top B^3s}{s^\top Bs}$, is non-positive. Note that by using the Cauchy--Schwarz inequality, we have
$$
\|Bs\|^2 = s^\top B^2s = s^\top B^{\frac{3}{2}}B^{\frac{1}{2}}s \leq \|B^{\frac{3}{2}}s\|\|B^{\frac{1}{2}}s\|.
$$
Now by computing the squared of both sides we obtain
$
  \|Bs\|^4 \leq \|B^{\frac{3}{2}}s\|^2\|B^{\frac{1}{2}}s\|^2 = s^\top B^3 ss^\top Bs,
$
which implies that 
\begin{equation}\label{proof_lemma_6_5}
 \left(\frac{\|Bs\|^2}{s^\top Bs}\right)^2 - \frac{s^\top B^3s}{s^\top Bs} \leq 0.
\end{equation}
By combining \eqref{proof_lemma_6_4} and \eqref{proof_lemma_6_5}, we obtain that 
\begin{equation}\label{proof_lemma_6_6}
    \frac{s^\top(B - I)B(B - I)s}{s^\top Bs} \leq \|B- I\|^2_F - \|D\|^2_F.
\end{equation}
The above inequality implies that $\|B - I\|^2_F - \|D\|^2_F \geq 0$. Moreover, using the fact that $a^2 - b^2 \leq 2a(a - b), \forall a,b \in \mathbb{R}$, we can show that
\begin{equation}\label{proof_lemma_6_7}
    \|B - I\|^2_F - \|D\|^2_F \leq 2\|B - I\|_F(\|B - I\|_F - \|D_k\|_F) \leq 2\delta(\|B - I\|_F - \|D\|_F).
\end{equation}
where the second inequality follows from $\|B - I\|^2_F - \|D\|^2_F \geq 0$ and the fact that 
$\|B - I\|_F \leq \delta$. Now if combine the results in \eqref{proof_lemma_6_6} and \eqref{proof_lemma_6_7}, we obtain that
\begin{equation}\label{proof_lemma_6_8}
    \|D\|_F \leq \|B - I\|_F - \frac{s^\top(B - I)B(B - I)s}{2\delta s^\top Bs},
\end{equation}
which provides an upper bound on $\|D\|_F$. Moreover, according to  \eqref{proof_lemma_5_8}, $\|E\|_F$ is bounded above by  
\begin{equation}\label{proof_lemma_6_9}
    \|E\|_F \leq \frac{3 + \tau}{1 - \tau}\tau.
\end{equation}
If we replace $\|D\|_F$ and $\|E\|_F$ with their upper bounds in \eqref{proof_lemma_6_8} and \eqref{proof_lemma_6_9}, we obtain that
$$
    \|B_+ - I\|_F \leq \|D\|_F + \|E\|_F \leq \|B - I\|_F - \frac{s^\top(B - I)B(B - I)s}{2\delta s^\top Bs} + V\tau,
$$
where $V = \frac{3 + \tau}{1 - \tau}$. Considering the notations in \eqref{proof_lemma_6_2}, the claim follows from the above inequality.
\end{proof}

Now we can combine Lemma \ref{lemma:lemma_5} and Lemma \ref{lemma:lemma_6} to derive a bound on the error of Hessian approximation for the (convex) Broyden class of quasi-Newton methods.

\begin{lemma}\label{lemma:lemma_7}
Consider the update of the (convex) Broyden family in \eqref{Broyden_Hessian_update} and recall the definition of $\tau_k$ in \eqref{main_def_4}. Suppose that for some $\delta > 0$ and some $k \geq 0$, we have that $\tau_k < 1$ and $\|\hat{B}_k - I\|_{F} \leq \delta$. Then, the matrix $B_{k+1}$ generated by  \eqref{Broyden_Hessian_update} satisfies the following inequality
\begin{equation}\label{lemma_7_1}
    \|\hat{B}_{k+1} - I\|_F \leq \|\hat{B}_k - I\|_F - \phi_k\frac{\|(\hat{B}_k - I)\hat{s}_k\|^2}{2\delta\|\hat{s}_k\|^2} - (1 - \phi_k)\frac{\hat{s}_k^\top (\hat{B}_k - I) \hat{B}_k (\hat{B}_k - I) \hat{s}_k}{2\delta \hat{s}_k^\top \hat{B}_k \hat{s}_k} + Z_k\tau_k,
\end{equation}
where $Z_k = \phi_k\|\hat{B}_k\|\frac{4}{(1 - \tau_k)^2} + \frac{3 + \tau_k}{1 - \tau_k}$. We also have that
\begin{equation}\label{lemma_7_2}
    \|\hat{B}_{k+1} - I\|_F \leq \|\hat{B}_k - I\|_F + Z_k\tau_k.
\end{equation}
\end{lemma}
\begin{proof}
Notice that $B_{k+1} = \phi_k B^{DFP}_{k+1} + (1 - \phi_k) B^{BFGS}_{k+1}$. Using this expression and the convexity of the norm, we can show that 
$$
 \|\hat{B}_{k+1} - I\|_F=
 \|\phi_k B^{DFP}_{k+1} + (1 - \phi_k) B^{BFGS}_{k+1}-I\|_F\leq \phi_k \|B^{DFP}_{k+1}-I\|_F+(1 - \phi_k)\| B^{BFGS}_{k+1}-I\|_F.
$$
By replacing $\|B^{DFP}_{k+1}-I\|_F$ and $\| B^{BFGS}_{k+1}-I\|_F$ with their upper bounds in  Lemma \ref{lemma:lemma_5} and Lemma \ref{lemma:lemma_6},  the claim in \eqref{lemma_7_1} follows. Moreover, since $\phi_k \in [0,1]$, $\delta > 0$, $\frac{\|(\hat{B}_k - I)\hat{s}_k\|^2}{\|\hat{s}_k\|^2} \geq 0$ and $\frac{\hat{s}_k^\top (\hat{B}_k - I) \hat{B}_k (\hat{B}_k - I) \hat{s}_k}{\hat{s}_k^\top \hat{B}_k \hat{s}_k} \geq 0$, the result in \eqref{lemma_7_1} implies \eqref{lemma_7_2}.
\end{proof}

\subsection{Linear convergence}\label{sec:main_result_linear_convergence}

In this section, we leverage the results from the previous section on the error of Hessian approximation to show that if the initial iterate is sufficiently close to the optimal solution and the initial Hessian approximation matrix is close  to the Hessian at the optimal solution, the iterates of BFGS and DFP converge at least linearly to the optimal solution. Moreover, the Hessian approximation matrices always stay close to the Hessian at the optimal solution and the norms of Hessian approximation matrix and its inverse are always bounded above. These results are essential in proving our non-asymptotic superlinear convergence results.

\begin{lemma}\label{lemma:lemma_8}
Consider the convex Broyden class of quasi-Newton methods described in Algorithm~\ref{algo_broyden}, and recall the definitions in \eqref{main_def_1}-\eqref{main_def_4}. Suppose Assumptions~\ref{ass_str_cvx_smooth} and \ref{ass_Hess_lip} hold. Moreover, suppose  the initial point $x_0$ and initial Hessian approximation matrix $B_0$ satisfy
\begin{equation}\label{lemma_8_1}
    \sigma_0 \leq \epsilon, \qquad \|\hat{B}_0 - I\|_F \leq \delta,
\end{equation}
where $\epsilon, \delta \in (0, \frac{1}{2})$ such that for some $\rho \in (0, 1)$, they satisfy
\begin{equation}\label{lemma_8_2}
\max_{k \geq 0}{\left[\phi_k\frac{4(2\delta + 1)}{(1 - \epsilon)^2} + \frac{3 + \epsilon}{1 - \epsilon}\right]}\frac{\epsilon}{1 - \rho} \leq \delta, \qquad \frac{\epsilon}{2} + 2\delta \leq (1 - 2\delta)\rho \ .
\end{equation}
Then, the sequence of iterates $\{x_k\}_{k=0}^{+\infty}$ converges to the optimal solution $x_*$ with
\begin{equation}\label{lemma_8_3}
\sigma_{k + 1} \leq \rho\sigma_{k}, \qquad \forall k \geq 0.
\end{equation}
Furthermore, the matrices $\{B_k\}_{k=0}^{+\infty}$ stay in a neighborhood of  $\nabla^{2}{f(x_*)}$ defined as
\begin{equation}\label{lemma_8_4}
\|\hat{B}_{k} - I\|_F \leq 2\delta, \qquad \forall k \geq 0.
\end{equation}
Moreover, the norms $\{\|\hat{B}_k\|\}_{k=0}^{+\infty}$ and $\{\|\hat{B}_k^{-1}\|\}_{k=0}^{+\infty}$ are all uniformly bounded above by
\begin{equation}\label{lemma_8_5}
\|\hat{B}_k\| \leq 1 + 2\delta, \qquad \|\hat{B}_k^{-1}\| \leq \frac{1}{1 - 2\delta}, \qquad \forall k \geq 0.
\end{equation}
\end{lemma}

\begin{proof}
The proof of this lemma is adapted from the proof of Theorem 3.1 in \cite{yabe2007local}. In \cite{yabe2007local}, the authors prove the results for the modified DFP method, while we consider the more general class of Broyden methods. We will use induction to prove \eqref{lemma_8_3}, \eqref{lemma_8_4} and \eqref{lemma_8_5}. First consider the base case of $k = 0$. By the initial condition \eqref{lemma_8_1}, it's obvious that \eqref{lemma_8_4} holds for $k = 0$. From \eqref{lemma_8_4} we know that all the eigenvalues of $\hat{B}_0$ are in the interval $[1 - 2\delta, 1 + 2\delta]$. Suppose that $\lambda_{max}(\hat{B}_0)$ is the largest eigenvalue of $\hat{B}_0$ and $\lambda_{min}(\hat{B}_0)$ is the smallest eigenvalue of $\hat{B}_0$, we have
$$
\|\hat{B}_0\| = \lambda_{max}(\hat{B}_0) \leq 1 + 2\delta, \quad \|\hat{B}_0^{-1}\| = \frac{1}{\lambda_{min}(\hat{B}_0)} \leq \frac{1}{1 - 2\delta}.
$$
Hence, \eqref{lemma_8_5} holds for $k = 0$. Based on Assumptions~\ref{ass_str_cvx_smooth}-\ref{ass_Hess_lip} and the definitions in \eqref{main_def_1}-\eqref{main_def_4}, we have
\begin{equation}\label{proof_lemma_8_1}
\begin{split}
    \sigma_1 & = \frac{M}{\mu^{\frac{3}{2}}}\|\nabla^{2}{f(x_*)}^{\frac{1}{2}}(x_1 - x_*)\| = \frac{M}{\mu^{\frac{3}{2}}}\|\nabla^{2}{f(x_*)}^{\frac{1}{2}}(x_0 - B_0^{-1}\nabla{f(x_0)} - x_*)\|\\
    & = \frac{M}{\mu^{\frac{3}{2}}}\|\nabla^{2}{f(x_*)}^{\frac{1}{2}}B_0^{-1}[\nabla{f(x_0)} - \nabla^2{f(x_*)}(x_0 - x_*) - (B_0 - \nabla^2{f(x_*)})(x_0 - x_*)]\|\\
    & = \frac{M}{\mu^{\frac{3}{2}}}\|\hat{B}_0^{-1}[\widehat{\nabla{f}}(x_0) - r_0 - (\hat{B}_0 - I)r_0]\| \leq \frac{M}{\mu^{\frac{3}{2}}}\|\hat{B}_0^{-1}\|\left(\|\widehat{\nabla{f}}(x_0) - r_0\| + \|\hat{B}_0 - I\|\|r_0\|\right).
\end{split}
\end{equation}
Now using the result in \eqref{lemma_4_4}, and the bounds in \eqref{lemma_8_1}, \eqref{lemma_8_2}, \eqref{lemma_8_4} and \eqref{lemma_8_5} for $k = 0$, we can write
$$
\sigma_1 \leq \frac{M}{\mu^{\frac{3}{2}}}\|\hat{B}_0^{-1}\|(\frac{\sigma_0}{2}\|r_0\| + \|\hat{B}_0 - I\|\|r_0\|) = \|\hat{B}_0^{-1}\|(\frac{\sigma_0}{2} + \|\hat{B}_0 - I\|)\sigma_0 \leq  \frac{1}{1 - 2\delta}(\frac{\epsilon}{2} + 2\delta)\sigma_0 \leq \rho\sigma_0.
$$
This indicates that the condition in \eqref{lemma_8_3} holds for $k = 0$. Hence, all the conditions in \eqref{lemma_8_3}, \eqref{lemma_8_4} and \eqref{lemma_8_5} hold for $k = 0$, and the base of induction is complete. Now we assume that the conditions in \eqref{lemma_8_3}, \eqref{lemma_8_4} and \eqref{lemma_8_5} hold for all $0 \leq k \leq t$, where $t \geq 0$. Our goal is to show that these conditions are also satisfied for the case of $k = t + 1$. Since \eqref{lemma_8_3} holds for all $0\leq k \leq t$, we have $\tau_k = \max\{\sigma_k, \sigma_{k + 1}\} = \sigma_k \leq \epsilon < 1$ for $0 \leq k \leq t$. Moreover, since the condition in \eqref{lemma_8_4} holds for $0\leq k\leq t$, we know that $\|\hat{B}_k-I\|_F \leq 2\delta$ for $0 \leq k \leq t$. Hence, by \eqref{lemma_7_2} in Lemma \ref{lemma:lemma_7}, we obtain that
\begin{equation}\label{proof_lemma_8_2}
    \|\hat{B}_{k+1} - I\|_F \leq \|\hat{B}_k - I\|_{F} + Z_k\sigma_k, \qquad 0 \leq k \leq t,
\end{equation}
where $Z_k = \phi_k\|\hat{B}_k\|\frac{4}{(1 - \sigma_k)^2} + \frac{3 + \sigma_k}{1 - \sigma_k}$. Using \eqref{lemma_8_5} and $\sigma_k \leq \epsilon$ for $0 \leq k \leq t$, we obtain that
$$
Z_k \leq\phi_k \frac{4(2\delta + 1)}{(1 - \epsilon)^2} + \frac{3 + \epsilon}{1 - \epsilon}, \qquad 0 \leq k \leq t.
$$
Further if \eqref{lemma_8_1} and \eqref{lemma_8_3} hold for $0 \leq k \leq t$, we have that
\begin{equation}\label{proof_lemma_8_3}
    \sum_{k = 0}^{t}\sigma_k \leq \sum_{k = 0}^{t}\rho^{k}\sigma_0 \leq \frac{\sigma_0}{1 - \rho} \leq \frac{\epsilon}{1 - \rho}.
\end{equation}
Considering these results we can show that
\begin{equation}\label{proof_lemma_8_4}
\begin{split}
    \sum_{k = 0}^{t}Z_k\sigma_k 
    & \leq \max_{0 \leq k \leq t}{\left[\phi_k\frac{4(2\delta + 1)}{(1 - \epsilon)^2} + \frac{3 + \epsilon}{1 - \epsilon}\right]}\sum_{k = 0}^{t}\sigma_k \leq \max_{0 \leq k \leq t}{\left[\phi_k\frac{4(2\delta + 1)}{(1 - \epsilon)^2} + \frac{3 + \epsilon}{1 - \epsilon}\right]}\frac{\epsilon}{1 - \rho} \leq \delta,
\end{split}
\end{equation}
where the last inequality holds due to the first inequality in \eqref{lemma_8_2}. By leveraging \eqref{proof_lemma_8_4} and \eqref{lemma_8_1} and computing the sum of the terms in the left and right hand side of \eqref{proof_lemma_8_2} from $k = 0$ to $t$, we obtain
$$
\|\hat{B}_{t + 1} - I\|_F \leq \|\hat{B}_0 - I\|_{F} + \sum_{k = 0}^{t}Z_k\sigma_k \leq \delta + \delta = 2\delta,
$$
which implies that \eqref{lemma_8_4} holds for $k = t + 1$. Applying the same techniques we used in the base case, we can prove that \eqref{lemma_8_3} and \eqref{lemma_8_5} hold for $k = t + 1$. Hence, all the claims in \eqref{lemma_8_3}, \eqref{lemma_8_4} and \eqref{lemma_8_5} hold for $k = t + 1$, and our induction step is complete.
\end{proof}

\subsection{Explicit non-asymptotic superlinear rate}\label{sec:main_result_superlinear_convergence}

In the previous section, we established local linear convergence of iterates generated by the convex Broyden class including DFP and BFGS. Indeed, these local linear results are not our ultimate goal, as first-order methods are also linearly convergent under the same assumptions. However, the linear convergence is required to establish a local non-asymptotic superlinear convergence result, which is our main contribution. 
Next, we state the main results of this paper on the non-asymptotic superlinear convergence rate of the convex Broyden class of quasi-Newton methods. To prove this claim, we use the results in Lemma~\ref{lemma:lemma_7} and Lemma~\ref{lemma:lemma_8}.

\begin{theorem}\label{thm_broyden}
Consider the convex Broyden class of quasi-Newton methods described in Algorithm~\ref{algo_broyden}. Suppose the objective function $f$ satisfies the conditions in Assumptions~\ref{ass_str_cvx_smooth} and \ref{ass_Hess_lip}. Moreover, suppose  the initial point $x_0$ and initial Hessian approximation matrix $B_0$ satisfy
\begin{equation}\label{thm_broyden_1}
    \frac{M}{\mu^{\frac{3}{2}}}\|\nabla^{2}f(x_*)^\frac{1}{2}(x_0 - x_*)\| \leq \epsilon, \qquad \|\nabla^{2}f(x_*)^{-\frac{1}{2}}\ \! (B_0 - \nabla^{2}f(x_*))\ \!\nabla^{2}f(x_*)^{-\frac{1}{2}}\|_F \leq \delta,
\end{equation}
where $\epsilon, \delta \in (0, \frac{1}{2})$ such that for some $\rho \in (0, 1)$, they satisfy
\begin{equation}\label{thm_broyden_2}
\max_{k \geq 0}{\left[\phi_k(2\delta + 1)\frac{4}{(1 - \epsilon)^2} + \frac{3 + \epsilon}{1 - \epsilon}\right]}\frac{\epsilon}{1 - \rho} \leq \delta, \qquad \frac{\epsilon}{2} + 2\delta \leq (1 - 2\delta)\rho \ .
\end{equation}
Then the iterates $\{x_{k}\}_{k=0}^{+\infty}$ generated by the convex Broyden class of quasi-Newton methods converge to $x_*$ at a superlinear rate of
\begin{equation}\label{thm_broyden_3}
\frac{\|\nabla^{2}f(x_*)^\frac{1}{2}(x_k - x_*)\|}{\|\nabla^{2}f(x_*)^\frac{1}{2}(x_0 - x_*)\|} \leq \left(\frac{2\sqrt{2}\delta(1 + \rho)(1 + \frac{\epsilon}{2})q\sqrt{k} + \frac{(1 + \rho)(1 + \frac{\epsilon}{2})\epsilon}{2(1 - \rho)}}{k}\right)^k, \qquad \forall k \geq 1,
\end{equation}
\begin{equation}\label{thm_broyden_4}
\frac{f(x_k) - f(x_*)}{f(x_0) - f(x_*)} \leq \left(1 + \frac{\epsilon}{2}\right)^2\left(\frac{2\sqrt{2}\delta(1 + \rho)(1 + \frac{\epsilon}{2})q\sqrt{k} + \frac{(1 + \rho)(1 + \frac{\epsilon}{2})\epsilon}{2(1 - \rho)}}{k}\right)^{2k}, \qquad \forall k \geq 1,
\end{equation}
where $q = \max_{k \geq 0}{\sqrt{\frac{1}{\phi_k + (1 - \phi_k)\frac{1 - 2\delta}{1 + 2\delta}}}} \in \left[1, \sqrt{\frac{1 + 2\delta}{1 - 2\delta}}\right]$.
\end{theorem}
\begin{proof}
When both conditions \eqref{thm_broyden_1} and \eqref{thm_broyden_2} hold, by Lemma \ref{lemma:lemma_8}, the results in \eqref{lemma_8_3}, \eqref{lemma_8_4} and \eqref{lemma_8_5} hold. This indicates that for any $t \geq 0$, we have
$$
\tau_t = \max\{\sigma_t, \sigma_{t+1}\} = \sigma_t \leq \sigma_0 \leq \epsilon < 1,\qquad
\|\hat{B}_t - I\|_F \leq 2\delta.
$$
Hence, using Lemma \ref{lemma:lemma_7} for any $t \geq 0$, we can show that
\begin{equation}\label{proof_thm_broyden_1}
   \|\hat{B}_{t+1} - I\|_F \leq \|\hat{B}_t - I\|_F - \phi_t\frac{\|(\hat{B}_t - I)\hat{s}_t\|^2}{4\delta\|\hat{s}_t\|^2} - (1 - \phi_t)\frac{\hat{s}_t^\top (\hat{B}_t - I) \hat{B}_t (\hat{B}_t - I) \hat{s}_t}{4\delta \hat{s}_t^\top \hat{B}_t \hat{s}_t} + Z_t\sigma_t,
\end{equation}
where $Z_t = \phi_t\|\hat{B}_t\|\frac{4}{(1 - \sigma_t)^2} + \frac{3 + \sigma_t}{1 - \sigma_t}$. Using \eqref{proof_lemma_8_3} and \eqref{proof_lemma_8_4}, for $k \geq 0$ we have
\begin{equation}\label{proof_thm_broyden_2}
    \sum_{t = 0}^{k}\sigma_t \leq \frac{\epsilon}{1 - \rho}, \qquad \sum_{t = 0}^{k}Z_t \sigma_t \leq \delta.
\end{equation}
Now compute the sum of both sides of \eqref{proof_thm_broyden_1} from $t = 0$ to $k - 1$ to obtain
$$
\|\hat{B}_k - I\|_F \leq \|\hat{B}_0 - I\|_F - \sum_{t = 0}^{k - 1}\left[\phi_t\frac{\|(\hat{B}_t - I)\hat{s}_t\|^2}{4\delta\|\hat{s}_t\|^2} + (1 - \phi_t)\frac{\hat{s}_t^\top (\hat{B}_t - I) \hat{B}_t (\hat{B}_t - I) \hat{s}_t}{4\delta \hat{s}_t^\top \hat{B}_t \hat{s}_t}\right] + \sum_{t = 0}^{k - 1}Z_t\sigma_t.
$$
Regroup the terms and use the results in \eqref{thm_broyden_1} and \eqref{proof_thm_broyden_2} to show that
\begin{align*}
& \sum_{t = 0}^{k - 1}\left[\phi_t\frac{\|(\hat{B}_t - I)\hat{s}_t\|^2}{4\delta\|\hat{s}_t\|^2} + (1 - \phi_t)\frac{\hat{s}_t^\top (\hat{B}_t - I) \hat{B}_t (\hat{B}_t - I) \hat{s}_t}{4\delta \hat{s}_t^\top \hat{B}_t \hat{s}_t}\right]\\ 
\leq & \|\hat{B}_0 - I\|_F - \|\hat{B}_k - I\|_F + \sum_{t = 0}^{k - 1}Z_t\sigma_t \nonumber \leq \|\hat{B}_0 - I\|_F + \sum_{t = 0}^{k - 1}Z_t\sigma_t\nonumber \leq \delta + \delta = 2\delta,
\end{align*}
which leads to
\begin{equation}\label{proof_thm_broyden_3}
\sum_{t = 0}^{k - 1}\left[\phi_t\frac{\|(\hat{B}_t - I)\hat{s}_t\|^2}{\|\hat{s}_t\|^2} + (1 - \phi_t)\frac{\hat{s}_t^\top (\hat{B}_t - I) \hat{B}_t (\hat{B}_t - I) \hat{s}_t}{\hat{s}_t^\top \hat{B}_t \hat{s}_t}\right] \leq 8\delta^2.
\end{equation}
Moreover, using the bounds in \eqref{lemma_8_5} we can show that
$$
\hat{s}_t^\top (\hat{B}_t - I) \hat{B}_t (\hat{B}_t - I) \hat{s}_t \geq \frac{1}{\|\hat{B}_t^{-1}\|}\|(\hat{B}_t - I) \hat{s}_t\|^2 \geq (1 - 2\delta)\|(\hat{B}_t - I) \hat{s}_t\|^2,
$$
$$
\hat{s}_t^\top \hat{B}_t \hat{s}_t \leq \|\hat{B}_t\|\|\hat{s}_t\|^2 \leq (1 + 2\delta)\|\hat{s}_t\|^2.
$$
Hence, we have 
\begin{equation}\label{proof_thm_broyden_4}
\frac{\hat{s}_t^\top (\hat{B}_t - I) \hat{B}_t (\hat{B}_t - I) \hat{s}_t}{\hat{s}_t^\top \hat{B}_t \hat{s}_t} \geq \frac{1 - 2\delta}{1 + 2\delta}\frac{\|(\hat{B}_t - I)\hat{s}_t\|^2}{\|\hat{s}_t\|^2}.
\end{equation}
By combining the bounds in \eqref{proof_thm_broyden_3} and \eqref{proof_thm_broyden_4}, we obtain
\begin{align*}
\sum_{t = 0}^{k - 1}\left[\phi_t + (1 - \phi_t)\frac{1 - 2\delta}{1 + 2\delta}\right] \frac{\|(\hat{B}_t - I)\hat{s}_t\|^2}{\|\hat{s}_t\|^2}
\leq 8\delta^2.
\end{align*}
Now by computing the minimum value of the term $\phi_t + (1 - \phi_t)\frac{1 - 2\delta}{1 + 2\delta}$, we can show
\begin{align*}
     \min_{k \geq 0}{\left[\phi_k + (1 - \phi_k)\frac{1 - 2\delta}{1 + 2\delta}\right]}\sum_{t = 0}^{k - 1}\frac{\|(\hat{B}_t - I)\hat{s}_t\|^2}{\|\hat{s}_t\|^2} \leq 8\delta^2,
\end{align*}
and by regrouping the terms, we obtain that
$$
\sum_{t = 0}^{k - 1}\frac{\|(\hat{B}_t - I)\hat{s}_t\|^2}{\|\hat{s}_t\|^2} \leq 8\delta^2\frac{1}{\min_{k \geq 0}{\left[\phi_k + (1 - \phi_k)\frac{1 - 2\delta}{1 + 2\delta}\right]}} = 8\delta^2\max_{k \geq 0}{\frac{1}{\phi_k + (1 - \phi_k)\frac{1 - 2\delta}{1 + 2\delta}}}.
$$
Considering the definition 
$
q := \max_{k \geq 0}{\sqrt{\frac{1}{\phi_k + (1 - \phi_k)\frac{1 - 2\delta}{1 + 2\delta}}}},
$
we can simplify our upper bound as
$$
\sum_{t = 0}^{k - 1}\frac{\|(\hat{B}_t - I)\hat{s}_t\|^2}{\|\hat{s}_t\|^2} \leq 8\delta^2q^2.
$$
By using the Cauchy-Schwarz inequality, we obtain that
\begin{equation}\label{proof_thm_broyden_5}
    \sum_{t = 0}^{k - 1}\frac{\|(\hat{B}_t - I)\hat{s}_t\|}{\|\hat{s}_t\|} \leq 2\sqrt{2}\delta q\sqrt{k}.
\end{equation}
Note that since $\phi_k \in [0, 1]$, we have $q \in \left[1, \sqrt{\frac{1 + 2\delta}{1 - 2\delta}}\right]$. The result in \eqref{proof_thm_broyden_5} provides an upper bound on $\sum_{t = 0}^{k - 1}\frac{\|(\hat{B}_t - I)\hat{s}_t\|}{\|\hat{s}_t\|} $, which is a crucial term in the remaining of our proof.

Now, note that $\nabla{f(x_t)} = J_t(x_t - x_*)$, where $J_t$ is defined in \eqref{main_def_5}. This implies that $ x_t - x_*=J_t^{-1}\nabla{f(x_t)}$ and hence  we have
$$
x_{t + 1} - x_* = x_t - x_* + s_t 
=J_t^{-1}\nabla{f(x_t)}+ s_t 
= - J_t^{-1}B_t s_t + s_t = J_t^{-1}(J_t - B_t)s_t.
$$
where the third equality holds since $-B_t s_t = \nabla{f(x_t)} $. Pre-multiply both sides of the above expression by $\nabla^2{f(x_*)}^{\frac{1}{2}}$  to obtain
$$
r_{t + 1} = \hat{J}_t^{-1}(\hat{J}_t - \hat{B}_t)\hat{s}_t = \hat{J}_t^{-1}[(\hat{J}_t - I)\hat{s}_t - (\hat{B}_t - I)\hat{s}_t].
$$
Therefore, we obtain that
$$
\|r_{t + 1}\| \leq \|\hat{J}_t^{-1}\|\left(\|(\hat{J}_t - I)\hat{s}_t\| + \|(\hat{B}_t - I)\hat{s}_t\|\right) \leq \|\hat{J}_t^{-1}\|\left(\|\hat{J}_t - I\| + \frac{\|(\hat{B}_t - I)\hat{s}_t\|}{\|\hat{s}_t\|}\right)\|\hat{s}_t\|.
$$
From Lemma~\ref{lemma:lemma_3} we know that $\|\hat{J}_t^{-1}\| \leq 1 + \frac{\sigma_t}{2}$ and $\|\hat{J}_t - I\| \leq \frac{\sigma_t}{2}$. Therefore, we have
\begin{equation}\label{proof_thm_broyden_6}
    \|r_{t + 1}\| \leq \left(1 + \frac{\sigma_t}{2}\right)\left(\frac{\sigma_t}{2} + \frac{\|(\hat{B}_t - I)\hat{s}_t\|}{\|\hat{s}_t\|}\right)\|\hat{s}_t\|.
\end{equation}
Also, since $\sigma_{t+1} \leq \rho\sigma_t$ and $\sigma_t = \frac{M}{\mu^{\frac{3}{2}}}\|r_t\|$, we obtain that $\|r_{t+1}\| \leq \rho\|r_t\|$. Hence, we can write 
\begin{equation}\label{proof_thm_broyden_7}
    \|\hat{s}_t\| = \|\nabla{f(x_*)}^{\frac{1}{2}}(x_{t+1} - x_* + x_* - x_t)\| \leq \|r_{t+1}\| + \|r_t\| \leq (1 + \rho)\|r_t\|.
\end{equation}
Using the expressions in \eqref{proof_thm_broyden_6} and \eqref{proof_thm_broyden_7}, we can show that $\frac{\|r_{t+1}\|}{\|r_t\|}$ is bounded above by 
\begin{equation}\label{proof_thm_broyden_8}
\frac{\|r_{t+1}\|}{\|r_t\|} \leq (1 + \rho)\left(1 + \frac{\sigma_t}{2}\right)\left(\frac{\sigma_t}{2} + \frac{\|(\hat{B}_t - I)\hat{s}_t\|}{\|\hat{s}_t\|}\right).
\end{equation}
Compute the sum of both sides of \eqref{proof_thm_broyden_8} from $t = 0$ to $k - 1$ and use $\sigma_t \leq \epsilon$, \eqref{proof_thm_broyden_2}, and \eqref{proof_thm_broyden_5} to obtain
$$
\sum_{t=0}^{k-1}\frac{\|r_{t+1}\|}{\|r_t\|} \leq (1 + \rho)(1 + \frac{\epsilon}{2})(\sum_{t = 0}^{k - 1}\frac{\sigma_t}{2} + \sum_{t = 0}^{k - 1}\frac{\|(\hat{B}_t - I)\hat{s}_t\|}{\|\hat{s}_t\|}) \leq (1 + \rho)(1 + \frac{\epsilon}{2})(\frac{\epsilon}{2(1 - \rho)} + 2\sqrt{2}\delta q\sqrt{k}).
$$
By leveraging the arithmetic-geometric inequality, we obtain that
\begin{equation}\label{proof_thm_broyden_9}
    \frac{\|r_k\|}{\|r_0\|} = \prod_{t=0}^{k-1}\frac{\|r_{t+1}\|}{\|r_t\|} \leq \left(\frac{\sum_{t=0}^{k-1}\frac{\|r_{t+1}\|}{\|r_t\|}}{k}\right)^k \leq \left(\frac{2\sqrt{2}\delta(1 + \rho)(1 + \frac{\epsilon}{2})q\sqrt{k} + \frac{(1 + \rho)(1 + \frac{\epsilon}{2})\epsilon}{2(1 - \rho)}}{k}\right)^k.
\end{equation}
The proof of \eqref{thm_broyden_3} is complete. Next, we proceed to prove \eqref{thm_broyden_4}. Based on the definition of $J_t$, we have
\begin{align*}
    f(x_t) - f(x_*) & = \nabla{f(x_*)}(x_t - x_*) + \frac{1}{2}(x_t - x_*)^\top J_t(x_t - x_*) = \frac{1}{2}r_t^\top \hat{J}_t r_t,
\end{align*}
where we used $\nabla{f(x_*)} = 0$ and the definitions in \eqref{main_def_4} and \eqref{main_def_5}. By Lemma~\ref{lemma:lemma_3} and $\sigma_t \leq \epsilon$, we have
\begin{equation}\label{proof_thm_broyden_10}
    f(x_0) - f(x_*) = \frac{1}{2}r_0^\top \hat{J}_0 r_0 \geq \frac{1}{2(1 + \frac{\sigma_0}{2})}\|r_0\|^2 \geq \frac{1}{2(1 + \frac{\epsilon}{2})}\|r_0\|^2,
\end{equation}
and
\begin{equation}\label{proof_thm_broyden_11}
    f(x_k) - f(x_*) = \frac{1}{2}r_k^\top \hat{J}_k r_k \leq \frac{1 + \frac{\sigma_k}{2}}{2}\|r_k\|^2 \leq \frac{1 + \frac{\epsilon}{2}}{2}\|r_k\|^2.
\end{equation}
By combining \eqref{proof_thm_broyden_9}, \eqref{proof_thm_broyden_10} and \eqref{proof_thm_broyden_11}, we obtain that 
$$
    \frac{f(x_k) - f(x_*)}{f(x_0) - f(x_*)} \leq \frac{\frac{1 + \frac{\epsilon}{2}}{2}\|r_k\|^2}{\frac{1}{2(1 + \frac{\epsilon}{2})}\|r_0\|^2} \leq \left(1 + \frac{\epsilon}{2}\right)^2\left(\frac{2\sqrt{2}\delta(1 + \rho)(1 + \frac{\epsilon}{2})q\sqrt{k} + \frac{(1 + \rho)(1 + \frac{\epsilon}{2})\epsilon}{2(1 - \rho)}}{k}\right)^{2k},
$$
and the claim in \eqref{thm_broyden_4} holds.
\end{proof}

The above theorem establishes the non-asymptotic superlinear convergence of the Broyden class of quasi-Newton methods. Notice that we use the weighted norm in \eqref{thm_broyden_3} to characterize the convergence rate. Using the fact that $\sqrt{\mu}\|x_t - x_*\| \leq \|r_t\| \leq \sqrt{L}\|x_t - x_*\|, \forall t \geq 0$, the result in \eqref{thm_broyden_3} implies that
\begin{equation}
\frac{\|x_k - x_*\|}{\|x_0 - x_*\|} \leq \sqrt{\frac{L}{\mu}}\left(\frac{2\sqrt{2}\delta(1 + \rho)(1 + \frac{\epsilon}{2})q\sqrt{k} + \frac{(1 + \rho)(1 + \frac{\epsilon}{2})\epsilon}{2(1 - \rho)}}{k}\right)^k, \qquad \forall k \geq 1.
\end{equation}
Next, we use the above theorem to report the results for DFP and BFGS, which are two special cases of the convex Broyden class of quasi-Newton methods. 

\begin{corollary}\label{cor_DFP_BFGS}
Consider the DFP and BFGS methods. Suppose  Assumptions~\ref{ass_str_cvx_smooth} and \ref{ass_Hess_lip} hold and for some $\epsilon, \delta \in (0, \frac{1}{2})$ and $\rho \in (0, 1)$, the initial point $x_0$ and initial Hessian approximation $B_0$ satisfy
\begin{equation}\label{cor_DFP_BFGS_1}
    \frac{M}{\mu^{\frac{3}{2}}}\|\nabla^{2}f(x_*)^\frac{1}{2}(x_0 - x_*)\| \leq \epsilon, \qquad \|\nabla^{2}f(x_*)^{-\frac{1}{2}}\ \! (B_0 - \nabla^{2}f(x_*))\ \!\nabla^{2}f(x_*)^{-\frac{1}{2}}\|_F \leq \delta.
\end{equation}

\begin{itemize}
        \setlength{\parskip}{3pt}
      \item  For the DFP method, if  the tuple $(\epsilon, \delta, \rho)$ satisfies
\begin{equation}\label{cor_DFP_BFGS_2}
\left[\frac{4(2\delta + 1)}{(1 - \epsilon)^2} + \frac{3 + \epsilon}{1 - \epsilon}\right]\frac{\epsilon}{1 - \rho} \leq \delta, \qquad \frac{\epsilon}{2} + 2\delta \leq (1 - 2\delta)\rho \ ,
\end{equation}
then the iterates $\{x_{k}\}_{k=0}^{+\infty}$ generated by the DFP method converge to $x_*$ at a superlinear rate of 
\begin{equation}\label{cor_DFP_BFGS_3}
\frac{\|\nabla^{2}f(x_*)^\frac{1}{2}(x_k - x_*)\|}{\|\nabla^{2}f(x_*)^\frac{1}{2}(x_0 - x_*)\|} \leq \left(\frac{2\sqrt{2}\delta(1 + \rho)(1 + \frac{\epsilon}{2})\sqrt{k} + \frac{(1 + \rho)(1 + \frac{\epsilon}{2})\epsilon}{2(1 - \rho)}}{k}\right)^k, \qquad \forall k \geq 1,
\end{equation}
\begin{equation}\label{cor_DFP_BFGS_4}
\frac{f(x_k) - f(x_*)}{f(x_0) - f(x_*)} \leq \left(1 + \frac{\epsilon}{2}\right)^2\left(\frac{2\sqrt{2}\delta(1 + \rho)(1 + \frac{\epsilon}{2})\sqrt{k} + \frac{(1 + \rho)(1 + \frac{\epsilon}{2})\epsilon}{2(1 - \rho)}}{k}\right)^{2k}, \qquad \forall k \geq 1.
\end{equation}

 \item  For the BFGS method, if the tuple $(\epsilon, \delta, \rho)$ satisfies
\begin{equation}\label{cor_DFP_BFGS_5}
\frac{(3 + \epsilon)\epsilon}{(1 - \epsilon)(1 - \rho)} \leq \delta, \qquad \frac{\epsilon}{2} + 2\delta \leq (1 - 2\delta)\rho \ ,
\end{equation}
then the iterates $\{x_{k}\}_{k=0}^{+\infty}$ generated by the BFGS method converge to $x_*$ at a superlinear rate of 
\begin{equation}\label{cor_DFP_BFGS_6}
\frac{\|\nabla^{2}f(x_*)^\frac{1}{2}(x_k - x_*)\|}{\|\nabla^{2}f(x_*)^\frac{1}{2}(x_0 - x_*)\|} \leq \left(\frac{2\sqrt{2}\delta(1 + \rho)(1 + \frac{\epsilon}{2})\sqrt{\frac{1 + 2\delta}{1 - 2\delta}}\sqrt{k} + \frac{(1 + \rho)(1 + \frac{\epsilon}{2})\epsilon}{2(1 - \rho)}}{k}\right)^k, \qquad \forall k \geq 1,
\end{equation}
\begin{equation}\label{cor_DFP_BFGS_7}
\frac{f(x_k) - f(x_*)}{f(x_0) - f(x_*)} \leq \left(1 + \frac{\epsilon}{2}\right)^2\left(\frac{2\sqrt{2}\delta(1 + \rho)(1 + \frac{\epsilon}{2})\sqrt{\frac{1 + 2\delta}{1 - 2\delta}}\sqrt{k} + \frac{(1 + \rho)(1 + \frac{\epsilon}{2})\epsilon}{2(1 - \rho)}}{k}\right)^{2k}, \qquad \forall k \geq 1.
\end{equation}

\end{itemize}
   
\end{corollary}

\begin{proof}
In Theorem~\ref{thm_broyden}, set  $\phi_k = 1$  to  obtain the results for DFP and set $\phi_k = 0$ to obtain the results for BFGS.
\end{proof}

The results in Corollary~\ref{cor_DFP_BFGS} indicate that, in a local neighborhood of the optimal solution, the  iterates generated by DFP and BFGS converge to the optimal solution at a superlinear rate of $({(C_1\sqrt{k}+C_2)}/{k})^k$, where the constants $C_1$ and $C_2$ are determined by $\rho$, $\epsilon$ and $\delta$. Indeed, as time progresses, the rate behaves as $\mathcal{O}\left((1/{\sqrt{k}})^{k}\right)$. The tuple $(\rho, \epsilon, \delta)$ is independent of the problem parameters $(\mu, L, M, d)$, and the only required condition for the tuple $(\rho, \epsilon, \delta)$ is that they should satisfy \eqref{cor_DFP_BFGS_2} or \eqref{cor_DFP_BFGS_5}. Note that the superlinear rate in \eqref{cor_DFP_BFGS_3} and  \eqref{cor_DFP_BFGS_6} is faster than linear rate of first-order methods as the contraction coefficient  approaches zero at a sublinear rate of $\mathcal{O}({1}/{\sqrt{k}})$. Similarly, in terms of the function value, the superlinear rate shown in \eqref{cor_DFP_BFGS_4} and \eqref{cor_DFP_BFGS_7} behaves as $\mathcal{O}\left((1/k)^{k}\right)$. The result in Corollary~\ref{cor_DFP_BFGS} also shows the existence of a trade-off between the \textit{rate of convergence} and the \textit{neighborhood of superlinear convergence}. We highlight this point in the following remark.

\begin{remark}
There exists a trade-off between the size of the local neighborhood in which DFP or BFGS converges superlinearly and their rate of convergence. To be more precise, by choosing larger values for $\epsilon$ and $\delta$ (as long as they satisfy  \eqref{cor_DFP_BFGS_2} or \eqref{cor_DFP_BFGS_5}), we can increase the size of the region in which quasi-Newton method has a fast superlinear convergence rate, but on the other hand, it will lead to a slower superlinear convergence rate according to the bounds in \eqref{cor_DFP_BFGS_3}, \eqref{cor_DFP_BFGS_4}, \eqref{cor_DFP_BFGS_6} and \eqref{cor_DFP_BFGS_7}. Conversely, by choosing small values for $\epsilon$ and $\delta$, the rate of convergence becomes faster, but the local neighborhood defined in \eqref{cor_DFP_BFGS_1} becomes smaller. 
\end{remark}

The final convergence results of Corollary~\ref{cor_DFP_BFGS} depend on the choice of parameters $(\rho, \epsilon, \delta)$, and it may not be easy to quantify the exact convergence rate at first glance. To better quantify the superlinear convergence rate of DFP and BFGS, in the following corollary, we state the results of Corollary~\ref{cor_DFP_BFGS} for specific choices of $\rho$, $\epsilon$ and $\delta$ which simplifies our expressions. Indeed, one can choose another set of values for these parameters to control the neighborhood and rate of superlinear convergence, as long as they satisfy the conditions in \eqref{cor_DFP_BFGS_2} for DFP and \eqref{cor_DFP_BFGS_5} for BFGS.

\begin{corollary}\label{cor_broyden}
Consider the DFP and BFGS methods and suppose  Assumptions~\ref{ass_str_cvx_smooth} and \ref{ass_Hess_lip} hold. Moreover, suppose the initial point $x_0$ and initial Hessian approximation matrix $B_0$ of DFP satisfy
\begin{equation}\label{cor_broyden_1}
\frac{M}{\mu^{\frac{3}{2}}}\|{\nabla^{2}f(x_*)^\frac{1}{2}}(x_0 - x_*)\| \leq \frac{1}{120}, \qquad \|{\nabla^{2}f(x_*)^{-\frac{1}{2}}}\ \! (B_0 - \nabla^{2}f(x_*))\ \!{\nabla^{2}f(x_*)^{-\frac{1}{2}}}\|_F \leq \frac{1}{7},
\end{equation}
and the initial point $x_0$ and initial Hessian approximation matrix $B_0$ of BFGS satisfy
\begin{equation}\label{cor_broyden_2}
\frac{M}{\mu^{\frac{3}{2}}}\|{\nabla^{2}f(x_*)^\frac{1}{2}}(x_0 - x_*)\| \leq \frac{1}{50}, \qquad \|{\nabla^{2}f(x_*)^{-\frac{1}{2}}}\ \! (B_0 - \nabla^{2}f(x_*))\ \!{\nabla^{2}f(x_*)^{-\frac{1}{2}}}\|_F \leq \frac{1}{7}.
\end{equation}
Then,  the iterates $\{x_k\}_{k=0}^{+\infty}$ generated by the DFP and BFGS methods satisfy
\begin{equation}\label{cor_broyden_3}
\frac{\|\nabla^2{f(x_*)}^{\frac{1}{2}}(x_k - x_*)\|}{\|\nabla^2{f(x_*)}^{\frac{1}{2}}(x_0 - x_*)\|} \leq \left(\frac{1}{k}\right)^{\frac{k}{2}}, \qquad \frac{f(x_k) - f(x_*)}{f(x_0) - f(x_*)} \leq 1.1\left(\frac{1}{k}\right)^{k}, \qquad \forall  k \geq 1.
\end{equation}
\end{corollary}

\begin{proof}
The results for DFP can be shown by setting $\rho = \frac{1}{2}$, $\epsilon = \frac{1}{120}$ and $\delta = \frac{1}{7}$ in Corollary~\ref{cor_DFP_BFGS}. We can check that for those values, the conditions in \eqref{cor_DFP_BFGS_2} are all satisfied. Moreover, the expressions in \eqref{cor_DFP_BFGS_3} and \eqref{cor_DFP_BFGS_4} can be simplified as 
$$
\frac{2\sqrt{2}\delta(1 + \rho)(1 + \frac{\epsilon}{2})\sqrt{k} + \frac{(1 + \rho)(1 + \frac{\epsilon}{2})\epsilon}{2(1 - \rho)}}{k} = \frac{\frac{2\sqrt{2}}{7}(1 + \frac{1}{2})(1 + \frac{1}{240})\sqrt{k} + \frac{(1 + \frac{1}{2})(1 + \frac{1}{240})\frac{1}{120}}{2(1 - \frac{1}{2})}}{k} < \frac{1}{\sqrt{k}},
$$
and $(1 + \epsilon)^2 = (1 + \frac{1}{120})^2 \leq 1.1$. So the claims in \eqref{cor_broyden_3} follow. The results for BFGS can be shown similarly by setting $\rho = \frac{1}{2}$, $\epsilon = \frac{1}{50}$ and $\delta = \frac{1}{7}$ in \eqref{cor_DFP_BFGS_5}, \eqref{cor_DFP_BFGS_6} and \eqref{cor_DFP_BFGS_7}. 
\end{proof}

The results in Corollary~\ref{cor_broyden} show that for some specific choices of $(\epsilon, \delta, \rho)$, the convergence rate of DFP and BFGS is $\left(1/k\right)^{k/2}$, which is asymptotically faster than any linear convergence rate of first-order methods. Moreover, we observe that the neighborhood in which this fast superlinear rate holds is slightly larger for BFGS compared to DFP, i.e., compare the first conditions in \eqref{cor_broyden_1} and \eqref{cor_broyden_2}. This is in consistence with the fact that in practice, BFGS often outperforms DFP.

A major shortcoming of the results in Corollary~\ref{cor_DFP_BFGS} and Corollary~\ref{cor_broyden} is that, in addition to assuming that the initial iterate $x_0$ is sufficiently close to the optimal solution, we also require the initial Hessian approximation error to be sufficiently small. In the following theorem, we resolve this issue by suggesting a practical choice for $B_0$ such that the second assumption in \eqref{cor_broyden_1} and \eqref{cor_broyden_2} can be satisfied under some conditions. To be more precise, we show that if $\|\nabla^2{f(x_*)}^\frac{1}{2}(x_0 - x_*)\|$ is sufficiently small (we formally describe this condition), then by setting $B_0 = \nabla^{2}{f(x_0)}$, the second condition in \eqref{cor_broyden_1} and \eqref{cor_broyden_2} for Hessian approximation is satisfied, and we can achieve the convergence rate in \eqref{cor_broyden_3}.

\begin{theorem}\label{thm_broyden_Hessian}
Consider the DFP and BFGS methods and suppose  Assumptions~\ref{ass_str_cvx_smooth} and \ref{ass_Hess_lip} hold. Moreover, for DFP, suppose  the initial point $x_0$ and initial Hessian approximation $B_0$ satisfy
\begin{equation}\label{thm_broyden_Hessian_1}
    \frac{M}{\mu^{\frac{3}{2}}}\|{\nabla^{2}f(x_*)^\frac{1}{2}}(x_0 - x_*)\| \leq \min\left\{\frac{1}{120}, \frac{1}{7\sqrt{d}}\right\}, \qquad B_0 = \nabla^{2}{f(x_0)},
\end{equation}
and for BFGS, they satisfy
\begin{equation}\label{thm_broyden_Hessian_2}
   \frac{M}{\mu^{\frac{3}{2}}}\|{\nabla^{2}f(x_*)^\frac{1}{2}}(x_0 - x_*)\| \leq \min\left\{\frac{1}{50}, \frac{1}{7\sqrt{d}}\right\}, \qquad B_0 = \nabla^{2}{f(x_0)}.
\end{equation}
Then, the iterates $\{x_k\}_{k=0}^{+\infty}$ generated by the DFP and BFGS methods satisfy
\begin{equation}\label{thm_broyden_Hessian_3}
\frac{\|\nabla^2{f(x_*)}^{\frac{1}{2}}(x_k - x_*)\|}{\|\nabla^2{f(x_*)}^{\frac{1}{2}}(x_0 - x_*)\|} \leq \left(\frac{1}{k}\right)^{\frac{k}{2}}, \qquad \frac{f(x_k) - f(x_*)}{f(x_0) - f(x_*)} \leq 1.1\left(\frac{1}{k}\right)^{k}, \qquad \forall  k \geq 1.
\end{equation}
\end{theorem}

\begin{proof}
First we consider the case of the DFP method. Notice that by \eqref{thm_broyden_Hessian_1}, we obtain
$$
\frac{M}{\mu^{\frac{3}{2}}}\|\nabla^{2}f(x_*)^\frac{1}{2}(x_0 - x_*)\| \leq \frac{1}{120}.
$$
Hence, the first part of \eqref{cor_broyden_1} is satisfied. Moreover, using Assumptions~\ref{ass_str_cvx_smooth} and \ref{ass_Hess_lip}, we have
\begin{align*}
    & \|\nabla^{2}f(x_*)^{-\frac{1}{2}}(\nabla^{2}f(x_0) - \nabla^{2}f(x_*))\nabla^{2}f(x_*)^{-\frac{1}{2}}\|_F\\
    \leq & \sqrt{d}\|\nabla^{2}f(x_*)^{-\frac{1}{2}}(\nabla^{2}f(x_0) - \nabla^{2}f(x_*))\nabla^{2}f(x_*)^{-\frac{1}{2}}\|\\
    \leq & \sqrt{d}\|\nabla^{2}f(x_*)^{-\frac{1}{2}}\|^2\|\nabla^{2}f(x_0) - \nabla^{2}f(x_*)\| \leq \sqrt{d}\frac{M}{\mu}\|x_0 - x_*\|\\
    = & \sqrt{d}\frac{M}{\mu}\|\nabla^{2}f(x_*)^{-\frac{1}{2}}\nabla^{2}f(x_*)^\frac{1}{2}(x_0 - x_*)\| \leq \sqrt{d}\frac{M}{\mu}\|\nabla^{2}f(x_*)^{-\frac{1}{2}}\|\|\nabla^{2}f(x_*)^\frac{1}{2}(x_0 - x_*)\|\\
    \leq & \sqrt{d}\frac{M}{\mu^{\frac{3}{2}}}\|\nabla^{2}f(x_*)^\frac{1}{2}(x_0 - x_*)\| \leq \frac{1}{7}.
\end{align*}
The first inequality holds as $\|A\|_F \leq \sqrt{d}\|A\|$ for any matrix $A \in \mathbb{R}^{d \times d}$, and the last inequality is due to the first part of \eqref{thm_broyden_Hessian_1}. The above bound shows that the second part of the \eqref{cor_broyden_1} is also satisfied, and by Corollary~\ref{cor_broyden} the claim follows. The proof for BFGS is similar to the proof for DFP. It can be derived by following the steps of proof of DFP and exploiting the BFGS results in Corollary~\ref{cor_broyden}.
\end{proof}

According to Theorem~\ref{thm_broyden_Hessian}, if the initial weighted error $\|\nabla^{2}f(x_*)^\frac{1}{2}(x_0 - x_*)\|$ is sufficiently small, then by setting the initial Hessian approximation $B_0$ as the Hessian at the initial point $\nabla^{2}{f(x_0)}$, the iterates will converge superlinearly at a rate of $(1/k)^{k/2}$. More specifically, based on the result in \eqref{lemma_4_4}, it suffices to have $\|\nabla^2{f(x^*)}^{-\frac{1}{2}}\nabla{f(x_0)}\| \leq \mathcal{O}(\mu^{\frac{3}{2}}/(M\sqrt{d}))$  to ensure $\|\nabla^2{f(x^*)}^{\frac{1}{2}}(x_0 - x^*)\| \leq \mathcal{O}(\mu^{\frac{3}{2}}/(M\sqrt{d}))$ as stated in \eqref{thm_broyden_Hessian_1} and \eqref{thm_broyden_Hessian_2}. Hence, this condition is satisfied when $\|\nabla{f(x_0)}\| \leq \mathcal{O}(\mu^2/(M\sqrt{d}))$. This observation implies that, in practice, we can exploit any optimization algorithm to find an initial point $x_0$ such that $\|\nabla{f(x_0)}\| \leq \mathcal{O}(\mu^2/(M\sqrt{d}))$, and once this condition is satisfied, by setting $B_0=\nabla^2 f(x_0)$ we obtain the guaranteed superlinear convergence result. The suggested procedure requires only one evaluation of the Hessian inverse for the initial iterate, and in the rest of the algorithm, the Hessian inverse approximations are updated according to the convex Broyden update in \eqref{Broyden_Hessian_inverse_update}.

\section{Analysis of Self-Concordant Functions}\label{sec:self-concordance}

The results that we have presented so far require three assumptions: (i) the objective function is strongly convex, (ii) its gradient is Lipschitz continuous (iii) and its Hessian is Lipschitz continuous only at the optimal solution. In this section, we extend our theoretical results to a different setting where the objective function is self-concordant.

\begin{assumption}\label{ass_self_concodant}
The objective function $f$ is self-concordant. In other words, it satisfies the following conditions: (i) it is three times continuously differentiable, (ii) $\nabla^2{f(x)} \succ 0$ for all $x \in dom(f)$, and (iii) the Hessian satisfies 
\begin{equation}
\left.\frac{d}{dt}\nabla^2{f(x + ty)}\right|_{t=0} \preceq 2\left(y^\top\nabla^2{f(x)}y\right)^{\frac{1}{2}}\nabla^2{f(x)}, \quad \forall x,y \in dom(f).
\end{equation}
\end{assumption}

The analysis of Newton-type methods for self-concordant functions (see, e.g., \cite{nesterov1989self,nesterov1994interior}) expands the theory of second-order algorithms beyond the classic setting considered in the previous section. This family of functions are of interest as it includes a large set of loss functions that are widely used in machine learning, such as linear functions, convex quadratic functions, and negative logarithm functions. In this section, we extend our results to this class of functions. 

We should mention that the setup considered in this section is neither more general nor more strict than the setup in the previous section. For instance, the function $f(x) = -\log{x}$ is self-concordant and satisfies Assumption~\ref{ass_self_concodant}, but it does not satisfy Assumption~\ref{ass_str_cvx_smooth} or Assumption~\ref{ass_Hess_lip} for any $x > 0$. 
Conversely, the self-concordance assumption is not a necessary condition for the assumption that the Hessian is Lipschitz continuous only at the optimal solution. For instance, the objective function
\begin{equation}
f(x) = 
     \begin{cases}
       7x^2 + 8x + 3 &\quad\text{if} \quad x \in (-\infty, -1)\\
       x^4 + x^2 &\quad\text{if} \quad x \in [-1, 1]\\
       7x^2 - 8x + 3 &\quad\text{if} \quad x \in (1, +\infty)\\
     \end{cases}
\end{equation}
 satisfies the conditions in Assumptions~\ref{ass_str_cvx_smooth} and \ref{ass_Hess_lip}. However, it is not self-concordant, as its third derivative is not continuous. 

Based on these points, the analysis in this section extends our convergence analysis of quasi-Newton methods to a new setting that is not covered by the setup in the previous section. 

We should also mention that in \cite{rodomanov2020greedy,rodomanov2020rates,rodomanov2020ratesnew} for the finite-time analysis of quasi-Newton methods, the authors assume that the objective function is \textit{strongly} self-concordant which forms a subclass of self-concordant functions. Note that a function $f$ is strongly self-concordant when there exists a constant $K \geq 0$ such that for any $x, y, z, w \in dom(f)$, we have
\begin{equation}
\nabla^2{f(y)} -\nabla^2{f(x)} \preceq K\left((y - x)^\top\nabla^2{f(z)}(y - x)\right)^{\frac{1}{2}}\nabla^2{f(w)}.
\end{equation}
In addition, in \cite{rodomanov2020greedy,rodomanov2020rates,rodomanov2020ratesnew} the authors require the objective function to be strongly convex and smooth. Indeed, our considered setting in this section is more general than the setup in these works as we only require the function to be self-concordant. 

Note that the condition $\nabla^2{f(x)} \succ 0$ guarantees that the inner product $s_k^\top y_k$ in quasi-Newton updates is always positive in all iterations, as stated in Section~\ref{sec:QN_methods}. Also by the definition of self-concordance, the function $f(x)$ is always strictly convex.
We start our analysis by stating the following lemma which plays an important role in our
analysis for self-concordant functions.

\begin{lemma}\label{lemma:lemma_9}
Suppose function $f$ satisfies the conditions in  Assumption~\ref{ass_self_concodant}. Further, consider the definition $G := \int_{0}^{1}\nabla^2{f(x + \alpha(y - x))}d\alpha$. 
If $x$ and $y $ are such that $r = \|\nabla^2{f(x)}^{\frac{1}{2}}(y - x)\| < 1$, then 
\begin{equation}\label{lemma_9_1}
    (1 - r)^{2}\nabla^2{f(x)} \preceq \nabla^2{f(y)} \preceq \frac{1}{(1 - r)^{2}}\nabla^2{f(x)},
\end{equation}
\begin{equation}\label{lemma_9_2}
    (1 - r + \frac{r^2}{3})\nabla^2{f(x)} \preceq G \preceq \frac{1}{1 - r}\nabla^2{f(x)}.
\end{equation}
\end{lemma}

\begin{proof}
Check Theorem 4.1.6 and Corollary 4.1.4 of \cite{nesterov2004introductory}.
\end{proof}

The next two lemmas are based on Lemma~\ref{lemma:lemma_9} and are similar to the results in Lemma~\ref{lemma:lemma_3} and \ref{lemma:lemma_4}, except here we prove them for the case that the conditions in Assumption~\ref{ass_self_concodant} are satisfied.

\begin{lemma}\label{lemma:lemma_10}
Recall the definition of $r_k$ in \eqref{main_def_4} and $\hat{J}_k$ in \eqref{main_def_5}. If Assumption~\ref{ass_self_concodant} holds and $\|r_k\| \leq \frac{1}{2}$, then for all $k \geq 0$ we have
\begin{equation}\label{lemma_10_1}
    \frac{1}{1 + 2\|r_k\|}I \preceq \hat{J}_k \preceq (1 + 2\|r_k\|)I.
\end{equation}
\end{lemma}

\begin{proof}
Check Appendix \ref{sec:proof_of_lemma_10}.
\end{proof}

\begin{lemma}\label{lemma:lemma_11}
Recall the definitions in \eqref{main_def_1} - \eqref{main_def_4} and consider the definition $\theta_k := \max\{\|r_k\|, \|r_{k+1}\|\}$. Suppose that for any $k \geq 0$, we have $\theta_k \leq \frac{1}{2}$. If Assumption~\ref{ass_self_concodant} holds, then for all $k \geq 0$ we have
\begin{equation}\label{lemma_11_1}
    \|\hat{y}_k - \hat{s}_k\| \leq 6\theta_k\|\hat{s}_k\|,
\end{equation}
\begin{equation}\label{lemma_11_2}
    (1 - 6\theta_k)\|\hat{s}_k\|^2 \leq \hat{s}_k^\top\hat{y}_k \leq (1 + 6\theta_k)\|\hat{s}_k\|^2,
\end{equation}
\begin{equation}\label{lemma_11_3}
    (1 - 6\theta_k)\|\hat{s}_k\| \leq \|\hat{y}_k\| \leq (1 + 6\theta_k)\|\hat{s}_k\|,
\end{equation}
\begin{equation}\label{lemma_11_4}
    \|\widehat{\nabla{f}}(x_k) - r_k\| \leq 2\|r_k\|^2.
\end{equation}
\end{lemma}

\begin{proof}
Check Appendix \ref{sec:proof_of_lemma_11}.
\end{proof}

By comparing  Lemma~\ref{lemma:lemma_10} and Lemma~\ref{lemma:lemma_11} with Lemma~\ref{lemma:lemma_3} and Lemma~\ref{lemma:lemma_4}, respectively, we observe that the only difference between these results is that we replaced ${\sigma_k}/{2} = ({M}/{(2\mu^{\frac{3}{2}}}))\|r_k\|$ by $2\|r_k\|$ and $\tau_k = \max\{\sigma_k, \sigma_{k+1}\}$ by $6\theta_k = 6\max\{\|r_k\|, \|r_{k+1}\|\}$. Due to this similarity, our results for the self-concordant setting are very similar to the previous case considered in Section~\ref{sec:main_results}. As a result, the superlinear convergence proof in this section is also similar to the one in Section~\ref{sec:main_results}. Next, we directly present the final superlinear convergence rate results for self-concordant functions.

\begin{theorem}\label{thm_broyden_self_concordant}
Consider the convex Broyden class of quasi-Newton methods described in Algorithm~\ref{algo_broyden}. Suppose the objective function $f$ satisfies the conditions in Assumption~\ref{ass_self_concodant}. Moreover, suppose  the initial point $x_0$ and initial Hessian approximation matrix $B_0$ satisfy
\begin{equation}\label{thm_broyden_self_concordant_1}
    \|{\nabla^{2}f(x_*)^\frac{1}{2}}(x_0 - x_*)\| \leq \frac{\epsilon}{6}, \qquad \|{\nabla^{2}f(x_*)^{-\frac{1}{2}}}\ \! (B_0 - \nabla^{2}f(x_*))\ \!{\nabla^{2}f(x_*)^{-\frac{1}{2}}}\|_F \leq \delta,
\end{equation}
where $\epsilon, \delta \in (0, \frac{1}{2})$ such that for some $\rho \in (0, 1)$, they satisfy
\begin{equation}\label{thm_broyden_self_concordant_2}
\max_{k \geq 0}{\left[\phi_k(2\delta + 1)\frac{4}{(1 - \epsilon)^2} + \frac{3 + \epsilon}{1 - \epsilon}\right]}\frac{\epsilon}{1 - \rho} \leq \delta, \qquad \frac{\epsilon}{3} + 2\delta \leq (1 - 2\delta)\rho \ .
\end{equation}
Then the iterates $\{x_{k}\}_{k=0}^{+\infty}$ generated by the convex Broyden class of quasi-Newton methods converge to $x_*$ at a superlinear rate of
\begin{equation}\label{thm_broyden_self_concordant_3}
\frac{\|\nabla^{2}f(x_*)^\frac{1}{2}(x_k - x_*)\|}{\|\nabla^{2}f(x_*)^\frac{1}{2}(x_0 - x_*)\|} \leq \left(\frac{2\sqrt{2}\delta(1 + \rho)(1 + \frac{\epsilon}{3})q\sqrt{k} + \frac{(1 + \rho)(1 + \frac{\epsilon}{3})\epsilon}{3(1 - \rho)}}{k}\right)^k, \qquad \forall k \geq 1,
\end{equation}
\begin{equation}\label{thm_broyden_self_concordant_4}
\frac{f(x_k) - f(x_*)}{f(x_0) - f(x_*)} \leq (1 + \frac{\epsilon}{3})^2\left(\frac{2\sqrt{2}\delta(1 + \rho)(1 + \frac{\epsilon}{3})q\sqrt{k} + \frac{(1 + \rho)(1 + \frac{\epsilon}{3})\epsilon}{3(1 - \rho)}}{k}\right)^{2k}, \qquad \forall k \geq 1,
\end{equation}
where $q = \max_{k \geq 0}{\sqrt{\frac{1}{\phi_k + (1 - \phi_k)\frac{1 - 2\delta}{1 + 2\delta}}}} \in \left[1, \sqrt{\frac{1 + 2\delta}{1 - 2\delta}}\right]$.
\end{theorem}

\begin{proof}
Check Appendix~\ref{sec:proof_of_thm_self_concordant}.
\end{proof}

Similarly, we can set $\phi_k = 1$ or $\phi_k = 0$ to obtain the results for DFP and BFGS, respectively, as stated in Corollary~\ref{cor_DFP_BFGS}. We can also select specific values for $(\epsilon, \delta, \rho)$  to simplify our bounds.

\begin{corollary}\label{cor_broyden_self_concordant}
Consider the DFP and BFGS methods and suppose  Assumption~\ref{ass_self_concodant} holds. Moreover, suppose for the DFP method, the initial point $x_0$ and initial Hessian approximation matrix $B_0$ satisfy
\begin{equation}\label{cor_broyden_self_concordant_1}
\|{\nabla^{2}f(x_*)^\frac{1}{2}}(x_0 - x_*)\| \leq \frac{1}{720}, \qquad \|{\nabla^{2}f(x_*)^{-\frac{1}{2}}}\ \! (B_0 - \nabla^{2}f(x_*))\ \!{\nabla^{2}f(x_*)^{-\frac{1}{2}}}\|_F \leq \frac{1}{7},
\end{equation}
and for the BFGS method, the initial point $x_0$ and initial Hessian approximation matrix $B_0$ satisfy
\begin{equation}\label{cor_broyden_self_concordant_2}
\|{\nabla^{2}f(x_*)^\frac{1}{2}}(x_0 - x_*)\| \leq \frac{1}{300}, \qquad \|{\nabla^{2}f(x_*)^{-\frac{1}{2}}}\ \! (B_0 - \nabla^{2}f(x_*))\ \!{\nabla^{2}f(x_*)^{-\frac{1}{2}}}\|_F \leq \frac{1}{7}.
\end{equation}
Then,  the iterates $\{x_k\}_{k=0}^{+\infty}$ generated by these methods satisfy 
\begin{equation}\label{cor_broyden_self_concordant_3}
\frac{\|\nabla^2{f(x_*)}^{\frac{1}{2}}(x_k - x_*)\|}{\|\nabla^2{f(x_*)}^{\frac{1}{2}}(x_0 - x_*)\|} \leq \left(\frac{1}{k}\right)^{\frac{k}{2}}, \qquad \frac{f(x_k) - f(x_*)}{f(x_0) - f(x_*)} \leq 1.1\left(\frac{1}{k}\right)^{k}, \qquad \forall  k \geq 1.
\end{equation}
\end{corollary}

\begin{proof}
As in the proof of Corollary~\ref{cor_broyden}, we set $\phi_k = 1$, $\rho = \frac{1}{2}$, $\epsilon = \frac{1}{120}$, $\delta = \frac{1}{7}$ for the DFP method and $\phi_k = 0$, $\rho = \frac{1}{2}$, $\epsilon = \frac{1}{50}$, $\delta = \frac{1}{7}$ for the BFGS method in Theorem~\ref{thm_broyden_self_concordant}. Then, the claims follow.
\end{proof}

We can also set the initial Hessian approximation matrix to be $\nabla^2{f(x_0)}$ as in Theorem~\ref{thm_broyden_Hessian} to achieve the same superlinear convergence rate as long as the distance between the initial point $x_0$ and the optimal point $x_*$ is sufficiently small.

\begin{theorem}\label{thm_broyden_Hessian_self_concordant}
Consider the DFP and BFGS methods and suppose  Assumption~\ref{ass_self_concodant} holds. Moreover, suppose for the DFP method, the initial point $x_0$ and initial Hessian approximation matrix $B_0$ satisfy
\begin{equation}\label{thm_broyden_Hessian_self_concordant_1}
    \|{\nabla^{2}f(x_*)^\frac{1}{2}}(x_0 - x_*)\| \leq \min\left\{\frac{1}{720}, \frac{1}{21\sqrt{d}}\right\}, \qquad B_0 = \nabla^{2}{f(x_0)},
\end{equation}
and for the BFGS method, they satisfy
\begin{equation}\label{thm_broyden_Hessian_self_concordant_2}
   \|{\nabla^{2}f(x_*)^\frac{1}{2}}(x_0 - x_*)\| \leq \min\left\{\frac{1}{300}, \frac{1}{21\sqrt{d}}\right\}, \qquad B_0 = \nabla^{2}{f(x_0)}.
\end{equation}
Then, the iterates $\{x_k\}_{k=0}^{+\infty}$ generated by these methods satisfy 
\begin{equation}\label{thm_broyden_Hessian_self_concordant_3}
\frac{\|\nabla^2{f(x_*)}^{\frac{1}{2}}(x_k - x_*)\|}{\|\nabla^2{f(x_*)}^{\frac{1}{2}}(x_0 - x_*)\|} \leq \left(\frac{1}{k}\right)^{\frac{k}{2}}, \qquad \frac{f(x_k) - f(x_*)}{f(x_0) - f(x_*)} \leq 1.1\left(\frac{1}{k}\right)^{k}, \qquad \forall  k \geq 1.
\end{equation}
\end{theorem}

\begin{proof}
First we focus on the DFP method. Notice that by \eqref{thm_broyden_Hessian_self_concordant_1} we have
$$
\|\nabla^{2}f(x_*)^\frac{1}{2}(x_0 - x_*)\| \leq \frac{1}{720}.
$$
Hence, the first condition in  \eqref{cor_broyden_self_concordant_1} is satisfied. Set $x = x_*$ and $y = x_0$ in Lemma~\ref{lemma:lemma_9}. Notice that $\|r_0\| = \|\nabla^2{f(x_*)}^{\frac{1}{2}}(x_0 - x_*)\| \leq \frac{1}{720} < 1$. Hence, using \eqref{lemma_9_1} we obtain that
$$
(1 - \|r_0\|)^2\nabla^2{f(x_*)} \preceq \nabla^2{f(x_0)} \preceq \frac{1}{(1 - \|r_0\|)^2}\nabla^2{f(x_*)}.
$$
Multiply the above expression by $\nabla^2{f(x_*)}^{-\frac{1}{2}}$ from left and right to obtain
$$
(1 - \|r_0\|)^2I \preceq \nabla^2{f(x_*)}^{-\frac{1}{2}}\nabla^2{f(x_0)}\nabla^2{f(x_*)}^{-\frac{1}{2}} \preceq \frac{1}{(1 - \|r_0\|)^2}I,
$$
which implies that
$$
\nabla^{2}f(x_*)^{-\frac{1}{2}}(\nabla^{2}f(x_0) - \nabla^{2}f(x_*))\nabla^{2}f(x_*)^{-\frac{1}{2}} \preceq \left(\frac{1}{(1 - \|r_0\|)^2} - 1\right)I,
$$
$$
\nabla^{2}f(x_*)^{-\frac{1}{2}}(\nabla^{2}f(x_0) - \nabla^{2}f(x_*))\nabla^{2}f(x_*)^{-\frac{1}{2}} \succeq \left((1 - \|r_0\|)^2 - 1\right)I.
$$
The above two inequalities indicate that
\begin{equation}\label{proof_thm_broyden_Hessian_self_concordant}
    \|\nabla^{2}f(x_*)^{-\frac{1}{2}}(\nabla^{2}f(x_0) - \nabla^{2}f(x_*))\nabla^{2}f(x_*)^{-\frac{1}{2}}\| \leq \max\left\{\frac{1}{(1 - \|r_0\|)^2} - 1, 1 - (1 - \|r_0\|)^2\right\}.
\end{equation}
Since $\|r_0\| \in [0, 1)$, we have that
$$
\frac{1}{(1 - \|r_0\|)^2} - 1 = \frac{1 - (1 - \|r_0\|)^2}{(1 - \|r_0\|)^2} \geq 1- (1 - \|r_0\|)^2.
$$
Hence, \eqref{proof_thm_broyden_Hessian_self_concordant} can be simplified as
\begin{align*}
    & \|\nabla^{2}f(x_*)^{-\frac{1}{2}}(\nabla^{2}f(x_0) - \nabla^{2}f(x_*))\nabla^{2}f(x_*)^{-\frac{1}{2}}\|\\
    \leq & \frac{1}{(1 - \|r_0\|)^2} - 1 = \frac{(2 - \|r_0\|)}{(1 - \|r_0\|)^2}\|r_0\| \leq \frac{(2 - \frac{1}{720})}{(1 - \frac{1}{720})^2}\|r_0\| \leq 3\|r_0\|,
\end{align*}
where the second inequality holds due to $\|r_0\| \leq \frac{1}{720}$. Therefore, we can show that
\begin{align*}
    & \|\nabla^{2}f(x_*)^{-\frac{1}{2}}(\nabla^{2}f(x_0) - \nabla^{2}f(x_*))\nabla^{2}f(x_*)^{-\frac{1}{2}}\|_F\\
    \leq & \sqrt{d}\|\nabla^{2}f(x_*)^{-\frac{1}{2}}(\nabla^{2}f(x_0) - \nabla^{2}f(x_*))\nabla^{2}f(x_*)^{-\frac{1}{2}}\| \leq 3\sqrt{d}\|r_0\| \leq \frac{1}{7},
\end{align*}
where the first inequality is true since $\|A\|_F \leq \sqrt{d}\|A\|$ for any matrix $A \in \mathbb{R}^{d \times d}$ and the last inequality is due to the first part of \eqref{thm_broyden_Hessian_self_concordant_1}. Hence, the second condition in  \eqref{cor_broyden_self_concordant_1} is also satisfied. By Corollary~\ref{cor_broyden_self_concordant}, we can conclude that \eqref{thm_broyden_Hessian_self_concordant_3} holds. The proof for BFGS is similar to the proof for DFP. It can be derived by following the steps of proof of DFP and exploiting the BFGS results in Corollary~\ref{cor_broyden_self_concordant}.
\end{proof}

In summary, we established the local convergence rate of the convex Broyden class of quasi-Newton methods  for self-concordant functions. We showed that if the initial distance to the optimal solution is $\|\nabla^{2}f(x_*)^\frac{1}{2}(x_0 - x_*)\| = \mathcal{O}(1)$ and the initial Hessian approximation error is $\|\nabla^{2}{f(x_*)}^{-\frac{1}{2}}(B_0 - \nabla^{2}f(x_*))\nabla^{2}{f(x_*)}^{-\frac{1}{2}}\|_F = \mathcal{O}(1)$, the iterations converge to the optimal solution at a superlinear rate of $\frac{\|\nabla^{2}f(x_*)^\frac{1}{2}(x_k - x_*)\|}{\|\nabla^{2}f(x_*)^\frac{1}{2}(x_0 - x_*)\|} = \mathcal{O}{\left(\frac{1}{\sqrt{k}}\right)^{k}}$ and $\frac{f(x_k) - f(x_*)}{f(x_0) - f(x_*)} = \mathcal{O}{\left(\frac{1}{k}\right)^{k}}$. Moreover, we can achieve the same superlinear rate if the initial error is $\|\nabla^{2}f(x_*)^\frac{1}{2}(x_0 - x_*)\| = \mathcal{O}(\frac{1}{\sqrt{d}})$ and the initial Hessian approximation matrix is $B_0 = \nabla^2{f(x_0)}$.

\section{Discussion}\label{sec:discussions}

In this section, we discuss the strengths and shortcomings of our theoretical results and compare them with concurrent papers \cite{rodomanov2020rates,rodomanov2020ratesnew} on the non-asymptotic superlinear convergence of DFP and BFGS. 

\noindent \textbf{Initial Hessian approximation condition.}
Note that in our main theoretical results, in addition to the fact that the initial iterate $x_0$  has to be close to the optimal solution $x_*$, which is a common condition for local convergence results, we also need the initial Hessian approximation $B_0$ to be close to the Hessian at the optimal solution $\nabla^2 f(x_*)$. At first glance, this might seem restrictive, but as we have shown in Theorem~\ref{thm_broyden_Hessian} and Theorem~\ref{thm_broyden_Hessian_self_concordant}, if we set the initial Hessian approximation to the Hessian at the initial point $\nabla^2 f(x_0)$, this condition is automatically satisfied as long as the initial iterate error $\|x_0-x_*\|$ is sufficiently small. From a practical point of view, this approach is reasonable as quasi-Newton methods and Newton's method outperform first-order methods in a local neighborhood of the optimal solution, and their global linear convergence rate may not be faster than the linear convergence rate of first-order methods. Hence, as suggested in \cite{nesterov2013introductory}, to optimize the overall iteration complexity according to theoretical bounds, one might use first-order methods such as Nesterov's accelerated gradient method to reach a local neighborhood of the optimal solution, and then switch to locally fast methods such as quasi-Newton methods. If this procedure is used, our theoretical results show that by setting $B_0=\nabla^2 f(x_0)$ (and equivalently $H_0=\nabla^2 f(x_0)^{-1}$) for the convex Broyden class of quasi-Newton, the fast superlinear convergence rate of $(1/k)^{k/2}$ can be obtained. 

It is worth noting that, however, in practice algorithms that do not require switching between algorithms or knowledge of problem parameters are more favorable. Due to these reasons, quasi-Newton methods with an Armijo-Wolfe line search are more practical, as they offer an adaptive choice of the steplength with global convergence and avoid specifying typically unknown constants such as the Lipschitz constant of the gradient, Lipschitz constant of the Hessian, and strong convexity parameter.

We should add that the frameworks in \cite{rodomanov2020rates,rodomanov2020ratesnew} require the initial Hessian approximation to be $B_0=LI$, where $I$ is the identity matrix and $L$ is the Lipschitz constant of the gradient. Indeed, satisfying this condition is computationally more affordable than our proposed scheme, as it does not require access to the Hessian or its inverse at the initial iterate $x_0$. However, it still requires a \textit{switching scheme}. To be more precise, it requires to monitor the error of iterates and setting the Hessian approximation as $LI$, once the error $\|x-x_*\|$ is sufficiently small. 
An ideal theoretical guarantee would be compatible with line-search schemes. To be more precise, in both mentioned analyses, we need to monitor the error $\|x-x_*\|$ and reset the Hessian approximation once the error is small. A more comprehensive analysis should be applicable to the case that we follow a line-search approach from the very beginning, and it would automatically  guarantee that once the iterates reach a local neighborhood of the optimal solution, the Hessian approximation for DFP or BFGS satisfies the required conditions for superlinear convergence without requiring to reset the Hessian approximation matrix. That said, the results in this work and \cite{rodomanov2020rates,rodomanov2020ratesnew} are first attempts to study the non-asymptotic behavior of quasi-Newton methods and there is indeed room for improving these results. 

\noindent \textbf{Convergence rate-neighborhood trade-off.}
As mentioned earlier, we observe a trade-off between the radius of the neighborhood in which BFGS and DFP converge superlinearly to the optimal solution and the rate (speed) of superlinear convergence. One important observation here is that for specific choices of $\epsilon$, $\delta$ and $\rho$, the rate of convergence could be independent of the problem dimension $d$, while the neighborhood of the convergence would depend on $d$. Note that by selecting different parameters we could improve the dependency of the neighborhood on $d$, at the cost of achieving a contraction factor that depends on $d$. In this case, the contraction factor may not be always smaller than $1$, and we can only guarantee that after a few iterations it becomes smaller than $1$ and eventually behaves as $1/k$. The results in \cite{rodomanov2020rates,rodomanov2020ratesnew} have a similar structure. For instance, in \cite{rodomanov2020rates}, the authors show that when the initial Newton decrement is smaller than $\frac{\mu^{\frac{5}{2}}}{ML}$, which is independent of the problem dimension, the convergence rate would be of the form $(\frac{dL}{\mu k})^{k/2}$. Hence, to observe the superlinear convergence rate one need to run the BFGS method at least for $d L/\mu$ iterations to ensure the contraction factor is smaller than $1$. A similar conclusion could be made using our results, if we adjust the neighborhood. In our main result, we only report the case that the neighborhood depends on $d$ and the rate is independent of that, since in this case the contraction factor is always smaller than $1$ and the superlinear behavior starts from the first iteration.

\section{Numerical Experiments}\label{sec:experiments}

In this section, we present our numerical experiments and compare the non-asymptotic performance of quasi-Newton methods with Newton's method and the gradient descent algorithm. We further investigate if the convergence rates of quasi-Newton methods are consistent with our theoretical guarantees. In particular, we solve the following logistic regression problem with $l_2$ regularization
\begin{equation}\label{eq_numerical_experiment}
    \min_{x \in \mathbb{R}^d} f(x) = \frac{1}{N}\sum_{i = 1}^{N}\ln{(1 + e^{-y_i z_i^\top x})} + \frac{\mu}{2}\|x\|^2.
\end{equation}
We assume that $\{z_i\}_{i = 1}^{N}$ are the data points and $\{y_i\}_{i = 1}^{N}$ are their corresponding labels where $z_i \in \mathbb{R}^d$ and $y_i \in \{-1, 1\}$ for $1 \leq i \leq N$. Note that the function $f(x)$ in \eqref{eq_numerical_experiment} is strongly convex with parameter $\mu > 0$. We normalize all data points such that $\|z_i\| = 1$ for all $1 \leq i \leq N$. Therefore, the gradient of the function $f(x)$ is Lipschitz continuous with parameter $L = 1+ \mu$. It is also well known that the logistic regression objective function is self-concordant and its Hessian is Lipschitz continuous. In summary, the objective function $f(x)$ defined in \eqref{eq_numerical_experiment} satisfies Assumptions~\ref{ass_str_cvx_smooth}--\ref{ass_Hess_lip} and Assumption~\ref{ass_self_concodant}.

We conduct our experiments on four different datasets: (i) Colon-cancer dataset \cite{colon}, (ii) Covertype dataset \cite{covtype}, (iii) GISETTE handwritten digits classification dataset from the NIPS 2003 feature selection challenge \cite{gisette} and (iv) MNIST dataset of handwritten digits \cite{lecun2010mnist}.\footnote{We use LIBSVM \cite{libsvm} with license: \url{https://www.csie.ntu.edu.tw/~cjlin/libsvm/COPYRIGHT}.} We compare the performance of DFP, BFGS, Newton's method, and gradient descent. We initialize all the algorithms with the same initial point $x_0 = c*\vec{\mathbf{1}}$ where $c > 0$ is a tuned parameter and $\vec{\mathbf{1}} \in \mathbb{R}^d$ is the one vector. We set the initial Hessian inverse approximation matrix as $\nabla^2{f(x_0)}^{-1}$ for the DFP and BFGS methods. The step size is $1$ for DFP, BFGS, and Newton's method. The step size of the gradient descent method is tuned by hand to achieve the best performance on each dataset.

All the parameters (sample size $N$, dimension $d$, initial point parameter $c$ and regularization $\mu$) of these different datasets are provided in Table~\ref{tab_1}. Notice that the initial point parameter $c$ is selected from the set $\mathcal{A} = \{0.001, 0.01, 0.1, 1, 10\}$ to guarantee that the initial point $x_0$ is close enough to the optimal solution $x_*$ so that we can achieve the superlinear convergence rate of DFP and BFGS on each dataset. The regularization parameter $\mu$ is also chosen from the same set $\mathcal{A}$ to obtain the best performance on each dataset.

\begin{table}[t]
  \vspace{1mm}
  \centering
  \begin{tabular}{ |c|c|c|c|c| }
    \hline
    Dataset & $N$ & $d$ & $c$ & $\mu$ \\
    \hline
    \hline
    Colon-cancer & 62 & 2000 & 0.1 & 0.01 \\
    \hline
    Covertype & 581,012 & 54 & 1 & 0.001 \\
    \hline
    GISETTE & 6000 & 5000 & 0.1 & 0.01 \\
    \hline
    MNIST & 11774 & 784 & 0.1 & 0.01 \\
    \hline
  \end{tabular}
  \caption{Sample size $N$, dimension $d$, initial point parameter $c$ and regularization $\mu$ of each dataset.}
  \label{tab_1}
  \vspace{-4mm}
\end{table}

From the theoretical results of Section~\ref{sec:main_result_superlinear_convergence} and Section~\ref{sec:self-concordance}, we expect the iterates $\{x_k\}_{k = 0}^{\infty}$ generated by the DFP method and the BFGS method to satisfy the following superlinear convergence rate

\begin{figure}[t!]
  \centering
  \begin{subfigure}[b]{0.36\linewidth}
    \includegraphics[width=\linewidth]{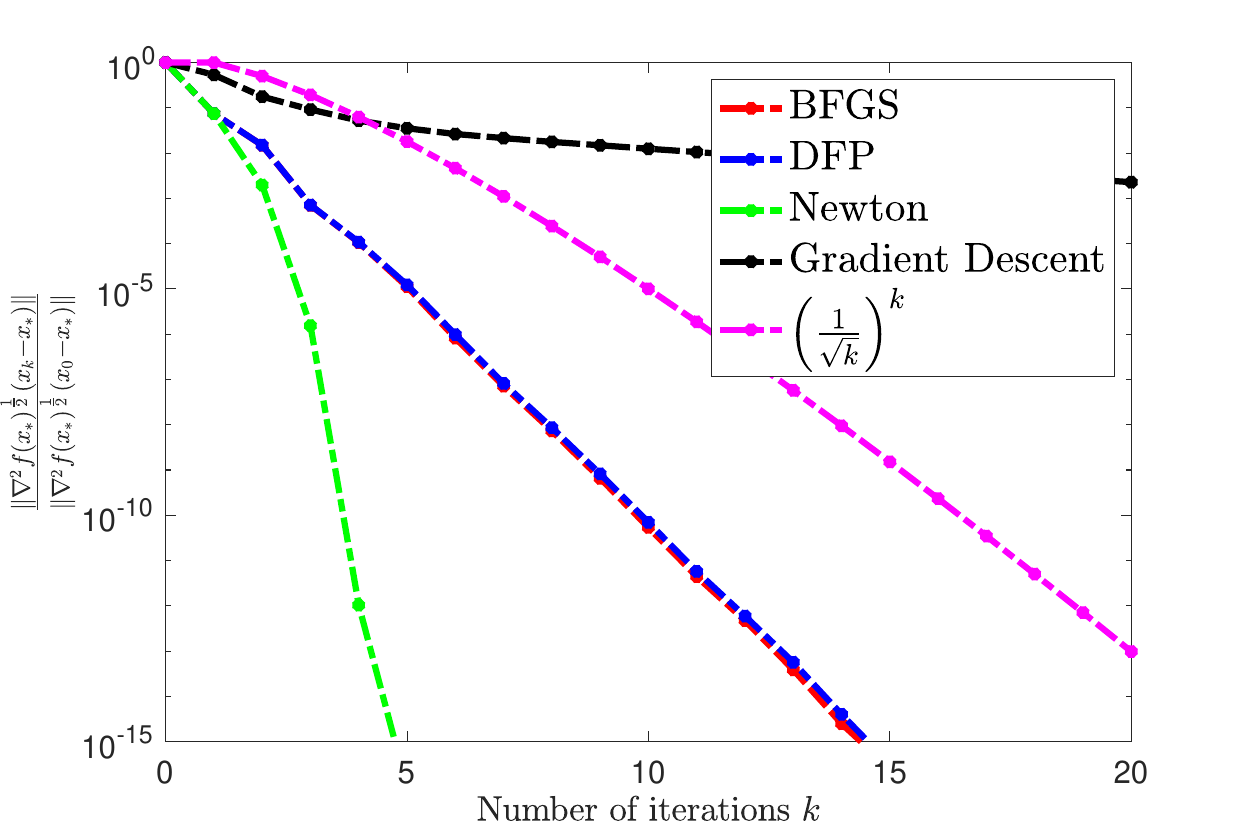}
    \caption{Results of $\frac{\|\nabla^2{f(x_*)}^{{1}/{2}}(x_k - x_*)\|}{\|\nabla^2{f(x_*)}^{{1}/{2}}(x_0 - x_*)\|}$.}
  \end{subfigure}
  \begin{subfigure}[b]{0.36\linewidth}
    \includegraphics[width=\linewidth]{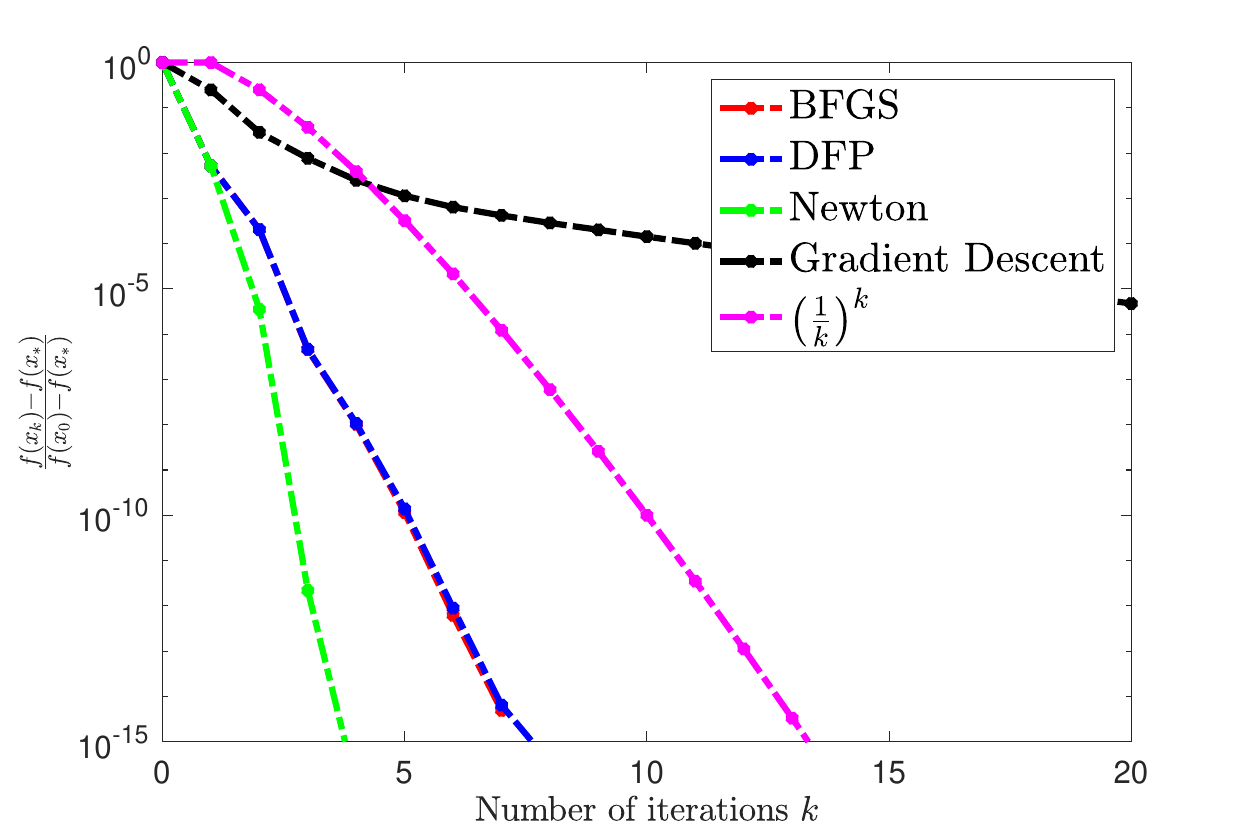}
    \caption{Results of $\frac{f(x_k) - f(x_*)}{f(x_0) - f(x_*)}$.}
  \end{subfigure}
  \captionsetup{labelformat=empty}
  \caption{\textbf{Fig. 1} Convergence rates of logistic regression on the Colon-cancer dataset.}
  \label{fig:f_1}
\end{figure}

\begin{figure}[t!]
  \centering
  \begin{subfigure}[b]{0.36\linewidth}
    \includegraphics[width=\linewidth]{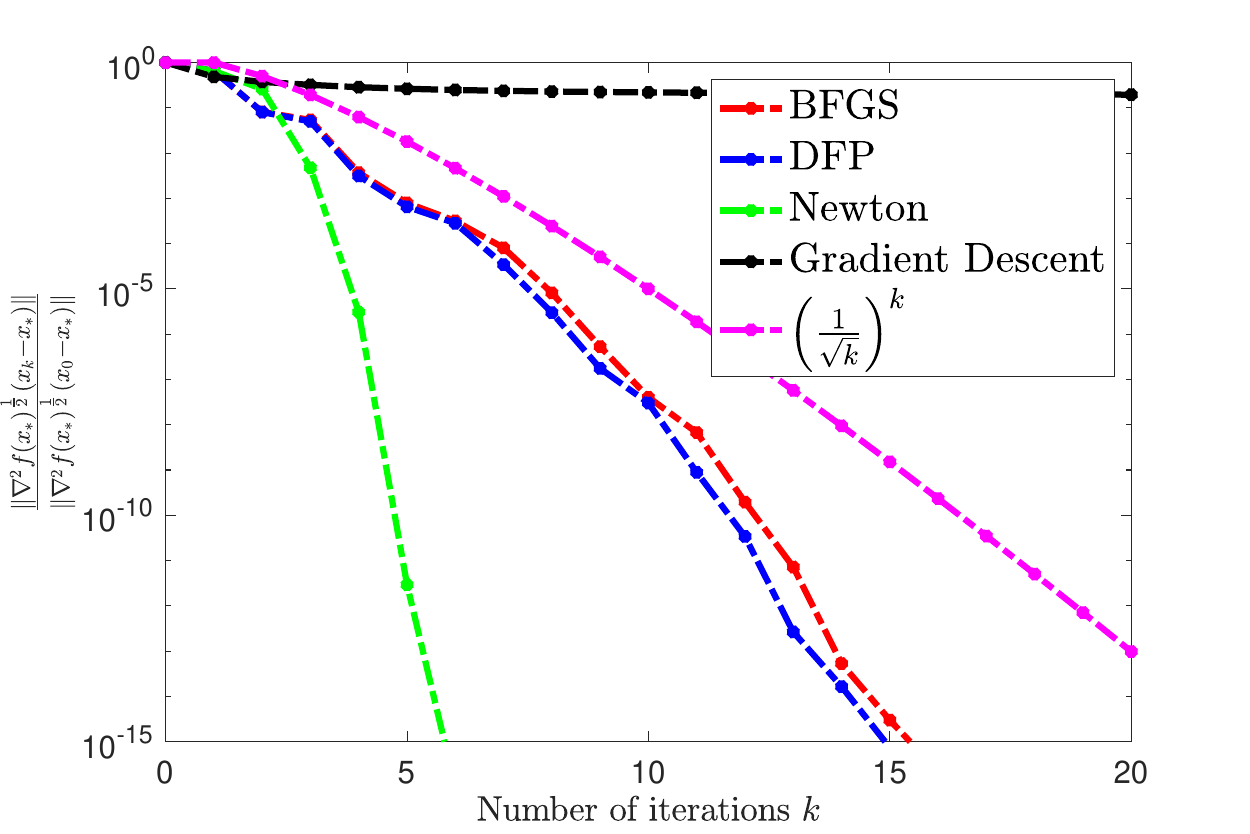}
    \caption{Results of $\frac{\|\nabla^2{f(x_*)}^{{1}/{2}}(x_k - x_*)\|}{\|\nabla^2{f(x_*)}^{{1}/{2}}(x_0 - x_*)\|}$.}
  \end{subfigure}
  \begin{subfigure}[b]{0.36\linewidth}
    \includegraphics[width=\linewidth]{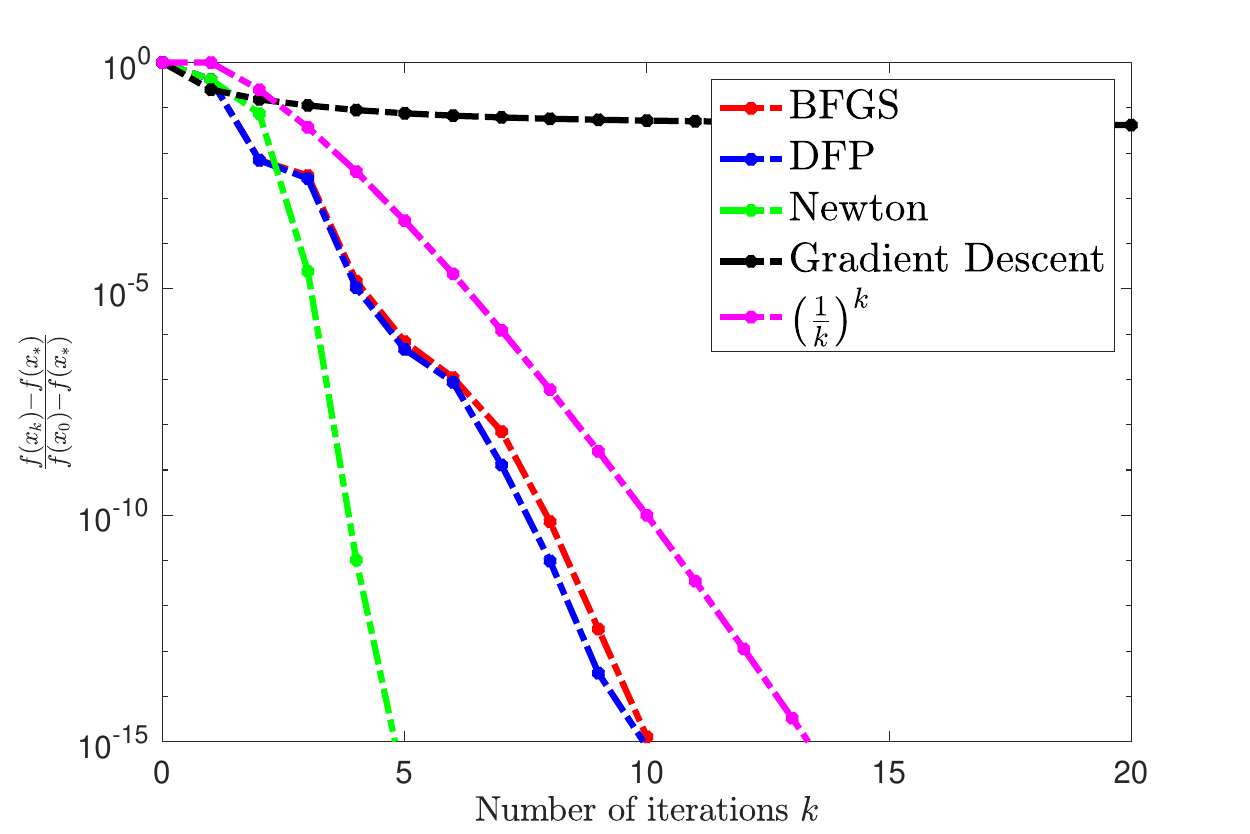}
    \caption{Results of $\frac{f(x_k) - f(x_*)}{f(x_0) - f(x_*)}$.}
  \end{subfigure}
  \captionsetup{labelformat=empty}
  \caption{\textbf{Fig. 2} Convergence rates of logistic regression on the Covertype dataset.}
  \label{fig:f_2}
\end{figure}

\begin{figure}[t!]
  \centering
  \begin{subfigure}[b]{0.36\linewidth}
    \includegraphics[width=\linewidth]{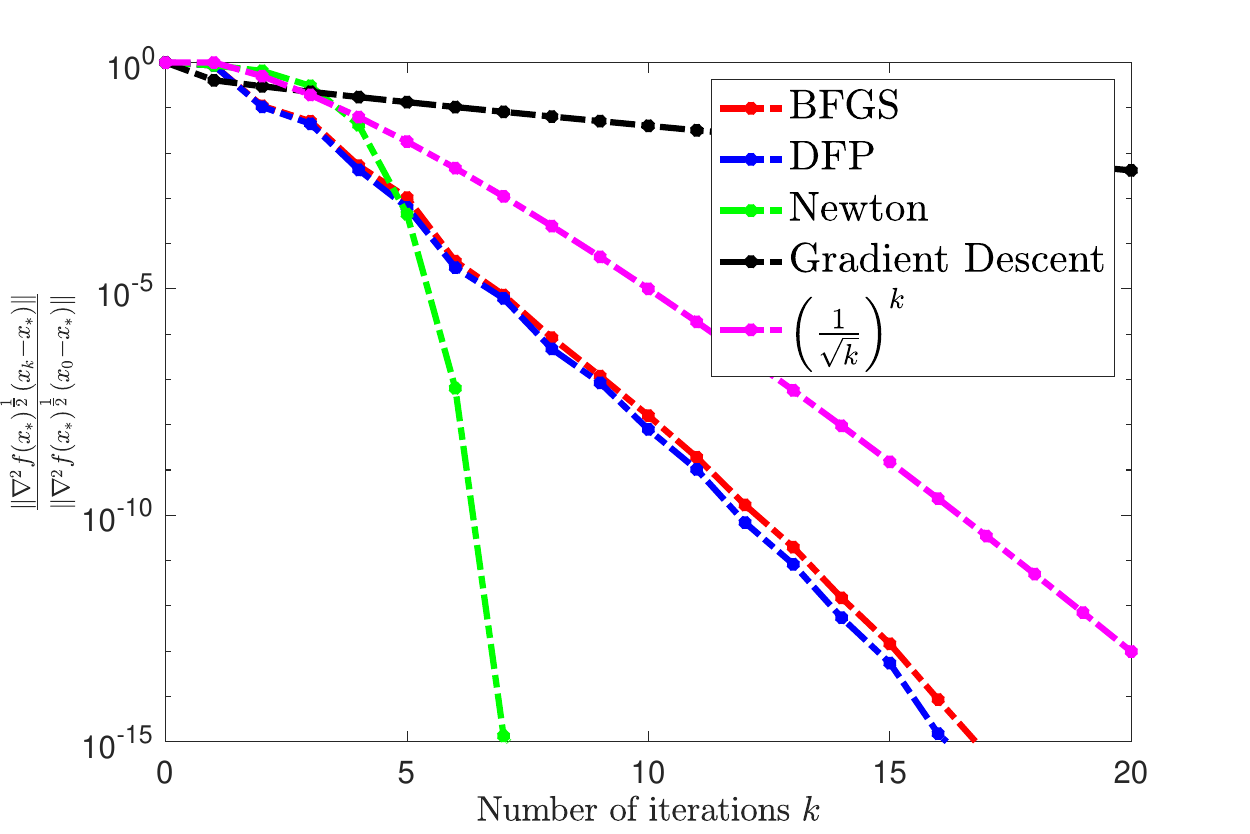}
    \caption{Results of $\frac{\|\nabla^2{f(x_*)}^{{1}/{2}}(x_k - x_*)\|}{\|\nabla^2{f(x_*)}^{{1}/{2}}(x_0 - x_*)\|}$.}
  \end{subfigure}
  \begin{subfigure}[b]{0.36\linewidth}
    \includegraphics[width=\linewidth]{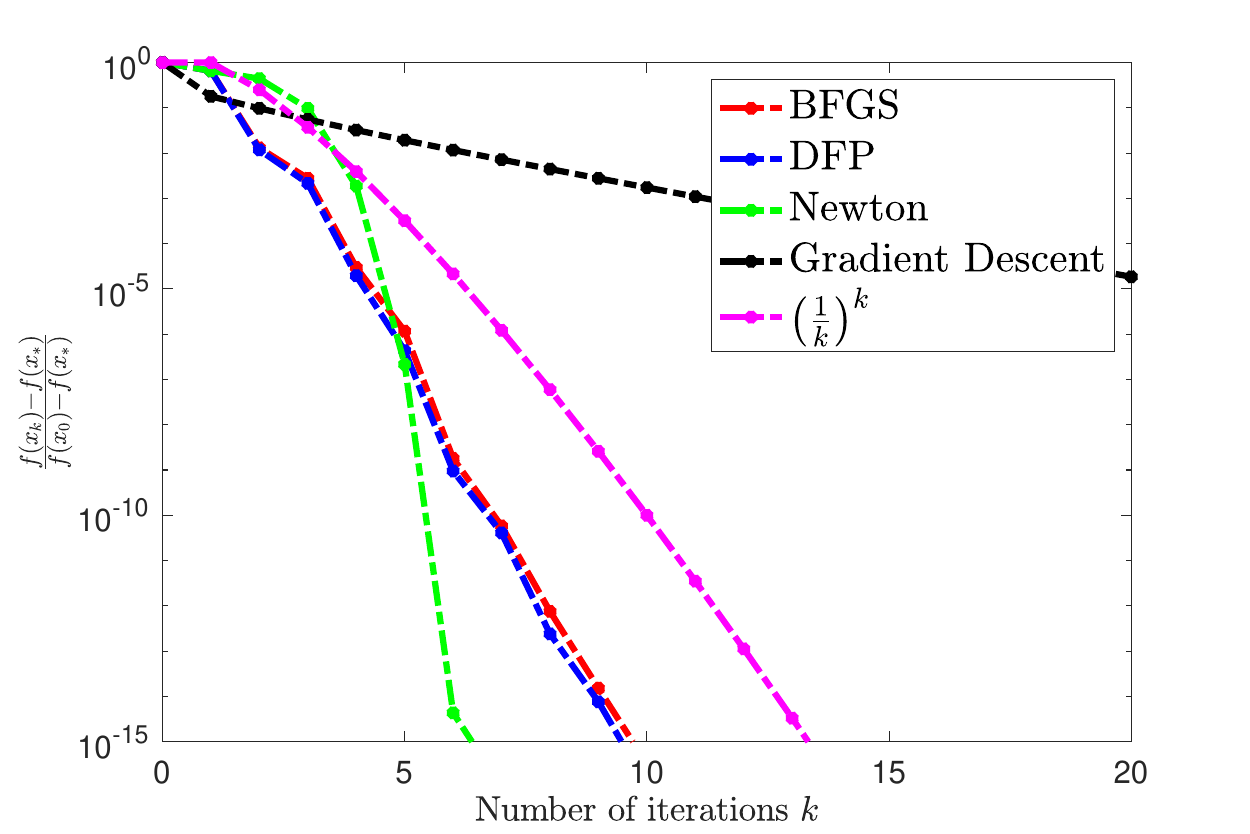}
    \caption{Results of $\frac{f(x_k) - f(x_*)}{f(x_0) - f(x_*)}$.}
  \end{subfigure}
  \captionsetup{labelformat=empty}
  \caption{\textbf{Fig. 3} Convergence rates of logistic regression on the GISETTE dataset.}
  \label{fig:f_3}
\end{figure}

\begin{figure}[t!]
  \centering
  \begin{subfigure}[b]{0.36\linewidth}
    \includegraphics[width=\linewidth]{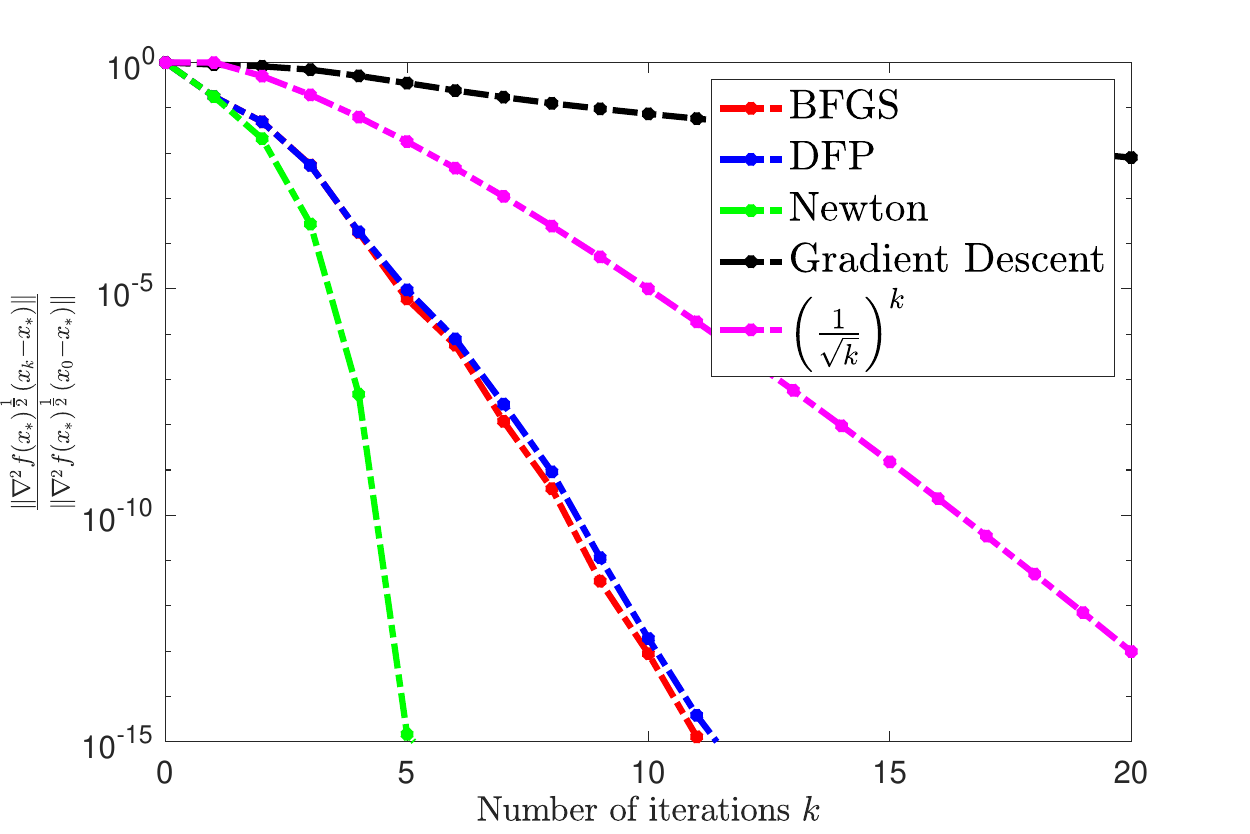}
    \caption{Results of $\frac{\|\nabla^2{f(x_*)}^{{1}/{2}}(x_k - x_*)\|}{\|\nabla^2{f(x_*)}^{{1}/{2}}(x_0 - x_*)\|}$.}
  \end{subfigure}
  \begin{subfigure}[b]{0.36\linewidth}
    \includegraphics[width=\linewidth]{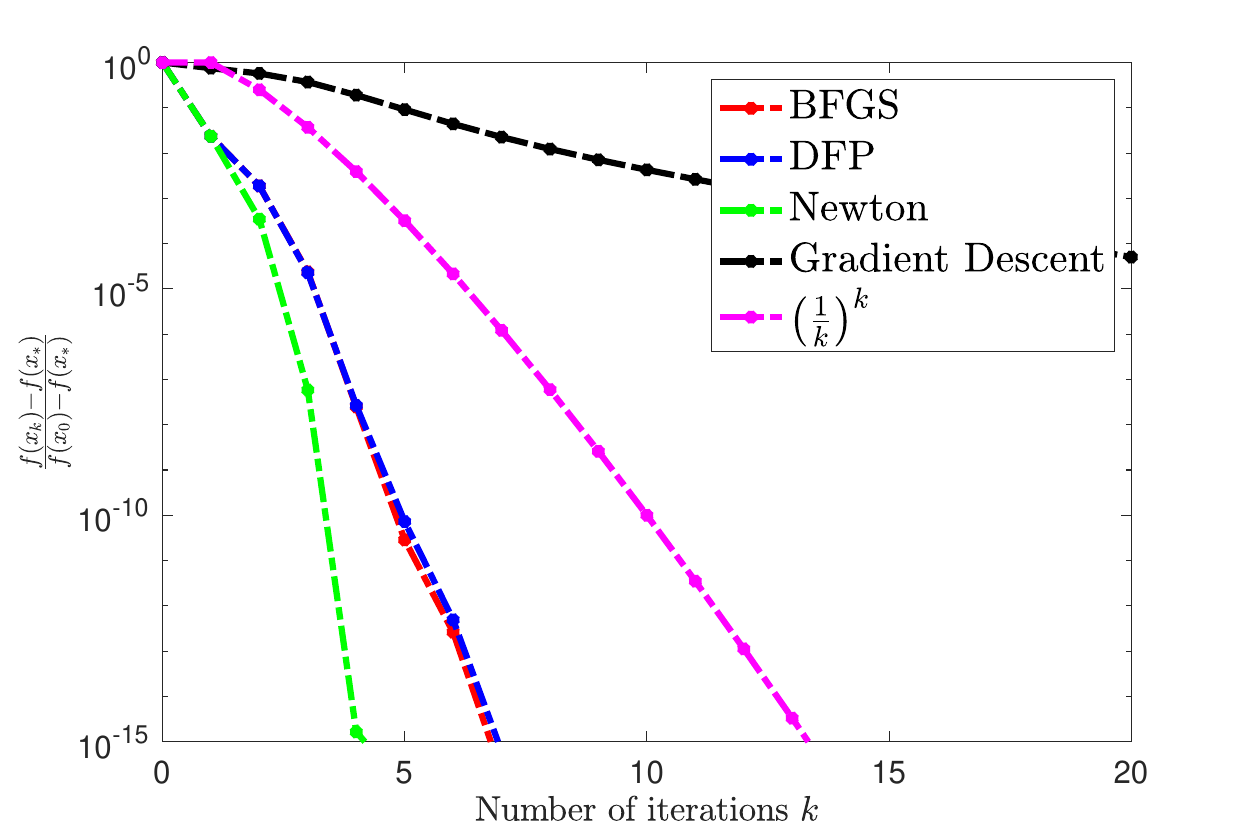}
    \caption{Results of $\frac{f(x_k) - f(x_*)}{f(x_0) - f(x_*)}$.}
  \end{subfigure}
  \captionsetup{labelformat=empty}
  \caption{\textbf{Fig. 4} Convergence rates of logistic regression on the MNIST dataset.}
  \label{fig:f_4}
\end{figure}

$$
\frac{\|\nabla^2{f(x_*)}^{\frac{1}{2}}(x_k - x_*)\|}{\|\nabla^2{f(x_*)}^{\frac{1}{2}}(x_0 - x_*)\|} \leq \left(\frac{1}{\sqrt{k}}\right)^{k}, \qquad \frac{f(x_k) - f(x_*)}{f(x_0) - f(x_*)} \leq 1.1\left(\frac{1}{k}\right)^{k}, \qquad \forall  k \geq 1.
$$
Hence, in our numerical experiments, we compare the convergence rate of $\frac{\|\nabla^2{f(x_*)}^{{1}/{2}}(x_k - x_*)\|}{\|\nabla^2{f(x_*)}^{{1}/{2}}(x_0 - x_*)\|}$ with $(\frac{1}{\sqrt{k}})^{k}$ and the convergence rate of $\frac{f(x_k) - f(x_*)}{f(x_0) - f(x_*)}$ with $(\frac{1}{k})^{k}$ to check the tightness of our theoretical bounds. Our numerical experiments are shown in Figures \ref{fig:f_1}, \ref{fig:f_2}, \ref{fig:f_3} and \ref{fig:f_4} for different datasets. Note that for each problem, we present two plots. The left plot (plot (a)) showcases $\frac{\|\nabla^2{f(x_*)}^{{1}/{2}}(x_k - x_*)\|}{\|\nabla^2{f(x_*)}^{{1}/{2}}(x_0 - x_*)\|}$ for different algorithms as well as our theoretical bound which is $(\frac{1}{\sqrt{k}})^{k}$. In the right plot (plot (b)), we compare $\frac{f(x_k) - f(x_*)}{f(x_0) - f(x_*)}$ for different methods with our theoretical bound which is $(\frac{1}{k})^{k}$. 

We observe that $\frac{\|\nabla^2{f(x_*)}^{{1}/{2}}(x_k - x_*)\|}{\|\nabla^2{f(x_*)}^{{1}/{2}}(x_0 - x_*)\|}$ for the DFP and BFGS methods are bounded above by $(\frac{1}{\sqrt{k}})^{k}$ and $\frac{f(x_k) - f(x_*)}{f(x_0) - f(x_*)}$ for the DFP and BFGS methods are bounded above by $(\frac{1}{k})^{k}$. Therefore, these experimental results confirm our theoretical superlinear convergence rates of quasi-Newton methods.

\section{Conclusion}\label{sec:conclusion}

In this paper, we studied the local convergence rate of the convex Broyden class of quasi-Newton methods which includes the DFP and BFGS methods. We focused on two settings: (i) the objective function is $\mu$-strongly convex, its gradient is  $L$-Lipschitz continuous, and its Hessian is Lipschitz continuous at the optimal solution with parameter $M$, (ii) the objective function is self-concordant. For these two settings we characterized the explicit non-asymptotic superlinear convergence rate of Broyden class of quasi-Newton methods. In particular, for the first setting, we showed that if the initial distance to the optimal solution is $\|\nabla^{2}f(x_*)^\frac{1}{2}(x_0 - x_*)\| = \mathcal{O}(\frac{\mu^{\frac{3}{2}}}{M})$ and the initial Hessian approximation error is $\|{\nabla^{2}f(x_*)^{-\frac{1}{2}}}\ \! (B_0 - \nabla^{2}f(x_*))\ \!{\nabla^{2}f(x_*)^{-\frac{1}{2}}}\|_F = \mathcal{O}(1)$, the iterations generated by the DFP and BFGS methods converge to the optimal solution at a superlinear rate of $\frac{\|\nabla^{2}f(x_*)^\frac{1}{2}(x_k - x_*)\|}{\|\nabla^{2}f(x_*)^\frac{1}{2}(x_0 - x_*)\|} = \mathcal{O}{\left(\frac{1}{\sqrt{k}}\right)^{k}}$ and $\frac{f(x_k) - f(x_*)}{f(x_0) - f(x_*)} = \mathcal{O}{\left(\frac{1}{k}\right)^{k}}$. We further showed that we can achieve the same superlinear convergence rate if the initial error is $\|\nabla^{2}f(x_*)^\frac{1}{2}(x_0 - x_*)\| = \mathcal{O}(\frac{\mu^\frac{3}{2}}{M\sqrt{d}})$ and the initial Hessian approximation matrix is $B_0 = \nabla^2{f(x_0)}$. We proved similar convergence rate results for the second setting where the objective function is self-concordant. 

\section*{Acknowledgment}

This research is supported by NSF Award CCF-2007668. Q. Jin also acknowledges support
from a National Initiative for Modeling and Simulation (NIMS) Graduate Research Fellowship.

\appendix
\section*{Appendix}

\section{Proof of Corollary \ref{corollary}}\label{sec:proof_corollary}

According to the definition $J = \int_{0}^{1}\nabla^2{f(x + t(y - x))}dt$, we have $
\nabla{f(x)} - \nabla{f(y)} = J(x - y)
$. Hence, we can write
\begin{equation}\label{proof_corollary_1}
    \|\nabla{f(x)} - \nabla{f(y)} - \nabla^{2}{f(x_*)}(x - y)\| = \|(J - \nabla^{2}{f(x_*)})(x - y)\| \leq \|J - \nabla^{2}{f(x_*)}\|\|x - y\|.
\end{equation}
Moreover, we can show that
$$
 \|J - \nabla^{2}{f(x_*)}\|  = \left\|\int_{0}^{1}[\nabla^2{f(x + t(y - x))} - \nabla^{2}{f(x_*)}]dt\right\|
     \leq \int_{0}^{1}\|\nabla^2{f(x + t(y - x))} - \nabla^{2}{f(x_*)}\|dt.
$$
By Assumption~\ref{ass_Hess_lip}, we can replace the upper bound in the above expression by the following
\begin{equation}\label{proof_corollary_2}
\begin{split}
    \|J - \nabla^{2}{f(x_*)}\| 
    & \leq M\int_{0}^{1}\|x + t(y - x) - x_*\|dt\\
    & \leq M\left[\int_{0}^{1}(1 - t)\|x - x_*\|dt + \int_{0}^{1}t\|y - x_*\|dt\right] = \frac{M}{2}(\|x - x_*\| + \|y - x_*\|).
\end{split}
\end{equation}
By combining \eqref{proof_corollary_1} and \eqref{proof_corollary_2}, the result in \eqref{corollary_1} follows.

\section{Proof of Lemma \ref{lemma:lemma_1}}\label{sec:proof_of_lemma_1}

Define $P := I - uu^\top$. Since $\|u\| = 1$, we have $P = P^\top$, $P^2 = P$, and $0 \preceq P \preceq I$. These properties imply that
\begin{equation}\label{proof_lemma_1_1}
    \|PAP\|^2_F = \mathrm{Tr}(PAPPA^\top P) = \mathrm{Tr}(PAPA^\top P) = \mathrm{Tr}(PPAPA^\top) = \mathrm{Tr}(PAPA^\top).
\end{equation}
Moreover, for symmetric matrices $X_1$ and $X_2$ that satisfy $X_1 \preceq X_2$ we have $\mathrm{Tr}(X_1 Y) \leq \mathrm{Tr}(X_2 Y)$ when  $Y$ is positive-semidefinite. This result and $0 \preceq P \preceq I$ imply that 
\begin{equation}\label{proof_lemma_1_2}
    \mathrm{Tr}(PAPA^\top) \leq \mathrm{Tr}(APA^\top) = \mathrm{Tr}(AA^\top - Auu^\top A^\top) = \|A\|^2_F - \|Au\|^2.
\end{equation}
By combining the results in  \eqref{proof_lemma_1_1} and \eqref{proof_lemma_1_2}, and considering the definition $P := I - uu^\top$, the claim in  \eqref{lemma_1_1} follows.

\section{Proof of Lemma~\ref{lemma:lemma_2}}\label{sec:proof_of_lemma_2}

Notice that $\mathrm{Tr}(X_1 Y) \leq \mathrm{Tr}(X_2 Y)$ for any symmetric matrices $X_1 \preceq X_2$ and symmetric positive-semidefinite matrix $Y$. Since $A^\top A \preceq \|A\|^2 I$, we obtain that
$$
\|AB\|^2_F = \mathrm{Tr}(B^\top A^\top AB) = \mathrm{Tr}(A^\top ABB^\top) \leq \|A\|^2\mathrm{Tr}(BB^\top) = \|A\|^2\|B\|^2_F,
$$
which leads to the first inequality in \eqref{lemma_2_1}. The second inequality in \eqref{lemma_2_1} follows from the first one, since
$$
\|B^\top AB\|_F \leq \|B^\top A\|\|B\|_F \leq \|A\|\|B\|\|B\|_F.
$$

\section{Proof of Lemma \ref{lemma:lemma_3}}\label{sec:proof_of_lemma_3}

By Assumption~\ref{ass_Hess_lip}, we have that
\begin{equation*}
\|J_k - \nabla^2{f(x_*)}\| = \left\|\int_{0}^{1}\left[\nabla^2{f(x_* + \alpha(x_k - x_*))} - \nabla^2{f(x_*)}\right]d\alpha\right\| \leq \int_{0}^{1}M\alpha\|x_k - x_*\|d\alpha = \frac{M}{2}\|x_k - x_*\|.
\end{equation*}
Hence, we have $J_k - \nabla^2{f(x_*)}\preceq  \frac{M}{2}\|x_k - x_*\|I$. Considering this bound and Assumption~\ref{ass_str_cvx_smooth}, we obtain
\begin{equation}\label{proof_lemma_3_1}
\begin{split}
    J_k - \nabla^2{f(x_*)} &  \preceq \frac{M}{2\mu}\|x_k - x_*\|\nabla^2{f(x_*)} = \frac{M}{2\mu}\|\nabla^2{f(x_*)}^{-\frac{1}{2}}\nabla^2{f(x_*)}^{\frac{1}{2}}(x_k - x_*)\|\nabla^2{f(x_*)}\\
    & \preceq \frac{M}{2\mu}\|\nabla^2{f(x_*)}^{-\frac{1}{2}}\|\|r_k\|\nabla^2{f(x_*)} \preceq \frac{M}{2\mu^{\frac{3}{2}}}\|r_k\|\nabla^2{f(x_*)} = \frac{\sigma_k}{2}\nabla^2{f(x_*)}.
\end{split}
\end{equation}
Similarly, we have that
\begin{equation}\label{proof_lemma_3_2}
    \nabla^2{f(x_*)} - J_k \preceq \|J_k - \nabla^2{f(x_*)}\|I \preceq  \frac{M}{2}\|x_k - x_*\|I \preceq \frac{M}{2\mu}\|x_k - x_*\|J_k \preceq \frac{\sigma_k}{2}J_k.
\end{equation}
Combining \eqref{proof_lemma_3_1} and \eqref{proof_lemma_3_2}, we obtain that
$$
\frac{1}{1 + \frac{\sigma_k}{2}}\nabla^2{f(x_*)} \preceq J_k \preceq (1 + \frac{\sigma_k}{2})\nabla^2{f(x_*)}.
$$
Multiplying both side of the above expression by $\nabla^2{f(x_*)}^{-\frac{1}{2}}$ from left and right leads to the result in \eqref{lemma_3_1}.

\section{Proof of Lemma~\ref{lemma:lemma_4}}\label{sec:proof_of_lemma_4}

By Assumption~\ref{ass_str_cvx_smooth} and Corollary~\ref{corollary}, we have
\begin{equation}\label{proof_lemma_4_1}
\begin{split}
\|\hat{y}_k - \hat{s}_k\| & = \|\nabla^{2}f(x_*)^{-\frac{1}{2}}y_k - \nabla^{2}f(x_*)^{\frac{1}{2}}s_k\| \leq \|\nabla^{2}f(x_*)^{-\frac{1}{2}}\|\|y_k - \nabla^{2}f(x_*)s_k\|\\
& = \|\nabla^{2}f(x_*)^{-\frac{1}{2}}\|\|\nabla{f(x_{k+1})} - \nabla{f(x_k)} - \nabla^{2}f(x_*)(x_{k+1} - x_k)\|\\
& \leq \frac{M}{\mu^{\frac{1}{2}}}\|s_k\|\frac{1}{2}(\|x_{k+1} - x_*\| + \|x_k - x_*\|) \leq \frac{M}{\mu^{\frac{1}{2}}}\|s_k\|\max{\{\|x_{k+1} - x_*\|, \|x_k - x_*\|\}}.
\end{split}
\end{equation}
Notice that
\begin{equation}\label{proof_lemma_4_2}
\|s_k\| = \|\nabla^{2}f(x_*)^{-\frac{1}{2}}\nabla^{2}f(x_*)^{\frac{1}{2}}s_k\| \leq \|\nabla^{2}f(x_*)^{-\frac{1}{2}}\|\|\hat{s}_k\| \leq \frac{1}{\mu^{\frac{1}{2}}}\|\hat{s}_k\|.
\end{equation}
Based on the definition $r_k = \nabla^{2}f(x_*)^{\frac{1}{2}}(x_k - x_*)$, we have $x_k - x_* = \nabla^{2}f(x_*)^{-\frac{1}{2}}r_k$ and hence
\begin{equation}\label{proof_lemma_4_3}
\max{\{\|x_{k+1} - x_*\|, \|x_k - x_*\|\}} \leq \|\nabla^{2}f(x_*)^{-\frac{1}{2}}\|\max{\{\|r_k\|, \|r_{k+1}\|\}} \leq \frac{1}{\mu^{\frac{1}{2}}}\max{\{\|r_k\|, \|r_{k+1}\|\}}.
\end{equation}
Substitute \eqref{proof_lemma_4_2} and \eqref{proof_lemma_4_3} into \eqref{proof_lemma_4_1} and recall the definition in \eqref{main_def_4} to obtain 
$$
\|\hat{y}_k - \hat{s}_k\| \leq \frac{M}{\mu^{\frac{3}{2}}}\max{\{\|r_k\|, \|r_{k+1}\|\}}\|\hat{s}_k\| = \tau_k\|\hat{s}_k\|.
$$
Hence, the proof of the first claim in \eqref{lemma_4_1} is complete. By using the  Cauchy-Schwarz inequality and \eqref{lemma_4_1}, we can write 
$$
|(\hat{y}_k - \hat{s}_k)^\top\hat{s}_k| \leq \|\hat{y}_k - \hat{s}_k\|\|\hat{s}_k\| \leq \tau_k\|\hat{s}_k\|^2.
$$
Therefore, we obtain that $$(1 - \tau_k)\|\hat{s}_k\|^2 \leq \hat{s}_k^\top\hat{y}_k \leq (1 + \tau_k)\|\hat{s}_k\|^2,$$
and the second claim in \eqref{lemma_4_2} holds. Using the reverse triangle inequality and \eqref{lemma_4_1}, we have $|\|\hat{y}_k\| - \|\hat{s}_k\|| \leq \|\hat{y}_k - \hat{s}_k\| \leq \tau_k\|\hat{s}_k\|$. Hence, the third claim in \eqref{lemma_4_3} holds. Finally, to prove the last claim in \eqref{lemma_4_4}, we use Assumption~\ref{ass_str_cvx_smooth} and Corollary~\ref{corollary} to show that
\begin{align*}
\|\widehat{\nabla{f}}(x_k) - r_k\| & = \|\nabla^{2}f(x_*)^{-\frac{1}{2}}\nabla{f(x_k)} - \nabla^{2}f(x_*)^{\frac{1}{2}}(x_k - x_*)\|\\
& \leq \|\nabla^{2}f(x_*)^{-\frac{1}{2}}\|\|\nabla{f(x_k)}  - \nabla{f(x_*)} - \nabla^{2}f(x_*)(x_k - x_*)\|\\
& \leq \frac{M}{2\mu^{\frac{1}{2}}}\|x_k - x_*\|^2 = \frac{M}{2\mu^{\frac{1}{2}}}\|\nabla^{2}f(x_*)^{-\frac{1}{2}}\nabla^{2}f(x_*)^{\frac{1}{2}}(x_k - x_*)\|^2\\
& \leq \frac{M}{2\mu^{\frac{1}{2}}}\|\nabla^{2}f(x_*)^{-\frac{1}{2}}\|^2\|\nabla^{2}f(x_*)^{\frac{1}{2}}(x_k - x_*)\|^2 \leq \frac{M}{2\mu^{\frac{3}{2}}}\|r_k\|^2 = \frac{\sigma_k}{2}\|r_k\|.
\end{align*}

\section{Proof of Lemma \ref{lemma:lemma_10}}\label{sec:proof_of_lemma_10}

Set $x = x_*$ and $y = x_k$ in Lemma~\ref{lemma:lemma_9} and note that $\|r_k\| \leq \frac{1}{2} < 1$. By \eqref{lemma_9_2}, we have
$$
\left(1 - \|r_k\| + \frac{\|r_k\|^2}{3}\right)\nabla^2{f(x_*)} \preceq J_k \preceq \frac{1}{1 - \|r_k\|}\nabla^2{f(x_*)}.
$$
Multiply the above expressions form left and right by   $\nabla^2{f(x_*)}^{-\frac{1}{2}}$ to obtain
\begin{equation}\label{proof_lemma_10_1}
   \left (1 - \|r_k\| + \frac{\|r_k\|^2}{3}\right)I \preceq \hat{J}_k \preceq \frac{1}{1 - \|r_k\|} I.
\end{equation}
Using the fact that $\|r_k\| \leq \frac{1}{2}$, we have
\begin{equation}\label{proof_lemma_10_2}
    1 - \|r_k\| + \frac{\|r_k\|^2}{3} \geq \frac{1}{1 + 2\|r_k\|}
\end{equation}
\begin{equation}\label{proof_lemma_10_3}
    \frac{1}{1 - \|r_k\|} \leq 1 + 2\|r_k\|.
\end{equation}
Replace the lower and upper bounds in  \eqref{proof_lemma_10_1} with the ones in \eqref{proof_lemma_10_2} and \eqref{proof_lemma_10_3}, respectively, to obtain result in \eqref{lemma_10_1}.

\section{Proof of Lemma~\ref{lemma:lemma_11}}\label{sec:proof_of_lemma_11}

We first show that for  $x = x_*$ and $y=x_k + \alpha(x_{k+1} - x_k)$, where $\alpha \in [0, 1]$, the value of $r=\|\nabla^2{f(x)}^{\frac{1}{2}}(y - x)\|$ defined in Lemma~\ref{lemma:lemma_9} is less than 1. To do so, note that 
\begin{align*}
    r & = \|\nabla^2{f(x_*)}^\frac{1}{2}(x_k + \alpha(x_{k+1} - x_k) - x_*)\|\leq \alpha\|r_{k + 1}\| + (1 - \alpha)\|r_k\| \leq  \max\{\|r_k\|, \|r_{k+1}\|\} = \theta_k \leq \frac{1}{2} < 1,
\end{align*}
where $\|r_k\|=\|\nabla^2{f(x_*)}^\frac{1}{2}(x_k-x_*)\|$. Note that in the above simplification we used the assumption that $\theta \leq 1/2$. Now using the result in  \eqref{lemma_9_1} we have
$$
(1 - r)^2\nabla^2{f(x_*)} \preceq \nabla^2{f(x_k + \alpha(x_{k+1} - x_k))} \preceq \frac{1}{(1 - r)^2}\nabla^2{f(x_*)}.
$$
Moreover, since $r \leq \theta_k \in [0, 1)$, we can write
$$
    (1 - \theta_k)^2\nabla^2{f(x_*)} \preceq \nabla^2{f(x_k + \alpha(x_{k+1} - x_k))} \preceq \frac{1}{(1 - \theta_k)^2}\nabla^2{f(x_*)}.
$$
By computing the integral for $\alpha$ from $0$ to $1$ in the above inequality, we get that
$$
(1 - \theta_k)^2\nabla^2{f(x_*)} \preceq G_k \preceq \frac{1}{(1 - \theta_k)^2}\nabla^2{f(x_*)},
$$
where we used the definition $G_k := \int_{0}^{1}\nabla^2{f(x_k + \alpha(x_{k+1} - x_k))}d\alpha$.
Multiplying the above expression from left and right by $\nabla^2{f(x_*)}^{-\frac{1}{2}}$ leads to
$$
(1 - \theta_k)^2I \preceq \hat{G}_k \preceq \frac{1}{(1 - \theta_k)^2}I,
$$
where $\hat{G}_k = \nabla^2{f(x_*)}^{-\frac{1}{2}}G_k\nabla^2{f(x_*)}^{-\frac{1}{2}}$. The above inequality is equivalent to
$$
\left((1 - \theta_k)^2 - 1\right)I \preceq \hat{G}_k - I \preceq \left(\frac{1}{(1 - \theta_k)^2} - 1\right)I,
$$
which indicates that
\begin{equation}\label{proof_of_lemma_11_1}
    \|\hat{G}_k - I\| \leq \max\left\{\frac{1}{(1 - \theta_k)^2} - 1, 1 - (1 - \theta_k)^2\right\}.
\end{equation}
Since $\theta_k \in [0, 1)$, we have that
$$
\frac{1}{(1 - \theta_k)^2} - 1 = \frac{1 - (1 - \theta_k)^2}{(1 - \theta_k)^2} \geq 1 - (1 - \theta_k)^2.
$$
Hence, \eqref{proof_of_lemma_11_1} can be simplified as
\begin{equation}\label{mohem}
\|\hat{G}_k - I\| \leq \frac{1}{(1 - \theta_k)^2} - 1 = \frac{(2 - \theta_k)}{(1 - \theta_k)^2}\theta_k \leq \frac{(2 - \frac{1}{2})}{(1 - \frac{1}{2})^2}\theta_k = 6\theta_k,
\end{equation}
where the second inequality holds due to $\theta_k \leq \frac{1}{2}$. 

Considering the definition $G_k := \int_{0}^{1}\nabla^2{f(x_k + \alpha(x_{k+1} - x_k))}d\alpha$, we have $y_k = G_k s_k$. Using this observation, we have
\begin{align*}
\|\hat{y}_k - \hat{s}_k\| & = \|\nabla^{2}f(x_*)^{-\frac{1}{2}}y_k - \nabla^{2}f(x_*)^{\frac{1}{2}}s_k\|\\
& = \|\nabla^{2}f(x_*)^{-\frac{1}{2}}G_k\nabla^{2}f(x_*)^{-\frac{1}{2}}\nabla^{2}f(x_*)^{\frac{1}{2}}s_k - \nabla^{2}f(x_*)^{\frac{1}{2}}s_k\|\\
& = \|\hat{G}_k\hat{s}_k - \hat{s}_k\| \leq \|\hat{G}_k - I\|\|\hat{s}_k\| \leq 6\theta_k\|\hat{s}_k\|,
\end{align*}
where the last inequality holds due to \eqref{mohem}.
Hence, the proof of the first claim in \eqref{lemma_11_1} is complete.

By using the Cauchy-Schwarz inequality and \eqref{lemma_11_1}, we can write 
$$
|(\hat{y}_k - \hat{s}_k)^\top\hat{s}_k| \leq \|\hat{y}_k - \hat{s}_k\|\|\hat{s}_k\| \leq 6\theta_k\|\hat{s}_k\|^2.
$$
Therefore, we obtain that $$(1 - 6\theta_k)\|\hat{s}_k\|^2 \leq \hat{s}_k^\top\hat{y}_k \leq (1 + 6\theta_k)\|\hat{s}_k\|^2,$$ and the second claim in \eqref{lemma_11_2} holds. Using the reverse triangle inequality and \eqref{lemma_11_1}, we have $|\|\hat{y}_k\| - \|\hat{s}_k\|| \leq \|\hat{y}_k - \hat{s}_k\| \leq 6\theta_k\|\hat{s}_k\|$. Hence, the third claim in \eqref{lemma_11_3} holds. Finally, using Lemma~\ref{lemma:lemma_10} we know that
$$
\|\hat{J}_k - I\| \leq \max\{2\|r_t\|, 1 - \frac{1}{1 + 2\|r_t\|}\} = 2\|r_t\|,
$$
and
\begin{align*}
\|\widehat{\nabla{f}}(x_k) - r_k\| & = \|\nabla^{2}f(x_*)^{-\frac{1}{2}}\nabla{f(x_k)} - \nabla^{2}f(x_*)^{\frac{1}{2}}(x_k - x_*)\|\\
& = \|\nabla^{2}f(x_*)^{-\frac{1}{2}}J_k(x_k - x_*) - \nabla^{2}f(x_*)^{\frac{1}{2}}(x_k - x_*)\|\\
& = \|\hat{J}_k r_k - r_k\| \leq \|\hat{J}_k - I\|\|r_k\| \leq 2\|r_k\|^2.
\end{align*}
Thus, the last claim in \eqref{lemma_11_4} holds.

\section{Proof of Theorem~\ref{thm_broyden_self_concordant}}\label{sec:proof_of_thm_self_concordant}

The proof of this Theorem~\ref{thm_broyden_self_concordant} is very similar to the proof of Theorem~\ref{thm_broyden}. The only difference is that we utilize the Lemma~\ref{lemma:lemma_10} and Lemma~\ref{lemma:lemma_11} instead of Lemma~\ref{lemma:lemma_3} and Lemma~\ref{lemma:lemma_4}. Hence, we need to replace all the terms $\frac{\sigma_k}{2} = \frac{M}{2\mu^{\frac{3}{2}}}\|r_k\|$ by $2\|r_k\|$ and $\tau_k = \max\{\sigma_k, \sigma_{k+1}\}$ by $6\theta_k = 6\max\{\|r_k\|, \|r_{k+1}\|\}$. Here, we only stated the outline of the proof and omit the details to avoid redundancy.

First we present the potential function similar to the \eqref{lemma_7_1} from Lemma~\ref{lemma:lemma_7}. Suppose that for some $\delta > 0$ and some $k \geq 0$, we have $\theta_k = \max\{\|r_k\|, \|r_{k+1}\|\} < \frac{1}{6}$ and $\|\hat{B}_k - I\|_{F} \leq \delta$. Then, the matrix $B_{k+1}$ generated by the convex Broyden class update \eqref{Broyden_Hessian_update} satisfies 
\begin{equation}\label{proof_thm_self_concordant_1}
    \|\hat{B}_{k+1} - I\|_F \leq \|\hat{B}_k - I\|_F - \phi_k\frac{\|(\hat{B}_k - I)\hat{s}_k\|^2}{2\delta\|\hat{s}_k\|^2} - (1 - \phi_k)\frac{\hat{s}_k^\top (\hat{B}_k - I) \hat{B}_k (\hat{B}_k - I) \hat{s}_k}{2\delta \hat{s}_k^\top \hat{B}_k \hat{s}_k} + 6Z_k\theta_k,
\end{equation}
where $Z_k = \phi_k\|\hat{B}_k\|\frac{4}{(1 - 6\theta_k)^2} + \frac{3 + 6\theta_k}{1 - 6\theta_k}$. We also have that
\begin{equation}\label{proof_thm_self_concordant_2}
    \|\hat{B}_{k+1} - I\|_F \leq \|\hat{B}_k - I\|_F + 6Z_k\theta_k.
\end{equation}
The proof of the above conclusion is the same as the proof we presented in Lemmas~\ref{lemma:lemma_5}, \ref{lemma:lemma_6}, and \ref{lemma:lemma_7} except that we use the results of Lemma~\ref{lemma:lemma_11} instead of Lemma~\ref{lemma:lemma_4}. Then, we present the similar linear convergence results like Lemma~\ref{lemma:lemma_8}. Suppose that the objective function $f$ satisfies the conditions in Assumption~\ref{ass_self_concodant}. Moreover, suppose  the initial point $x_0$ and initial Hessian approximation matrix $B_0$ satisfy
\begin{equation}
    \|{\nabla^{2}f(x_*)^\frac{1}{2}}(x_0 - x_*)\| \leq \frac{\epsilon}{6}, \qquad \|{\nabla^{2}f(x_*)^{-\frac{1}{2}}}\ \! (B_0 - \nabla^{2}f(x_*))\ \!{\nabla^{2}f(x_*)^{-\frac{1}{2}}}\|_F \leq \delta,
\end{equation}
where $\epsilon, \delta \in (0, \frac{1}{2})$ such that for some $\rho \in (0, 1)$, they satisfy
\begin{equation}
\max_{k \geq 0}{\left[\phi_k(2\delta + 1)\frac{4}{(1 - \epsilon)^2} + \frac{3 + \epsilon}{1 - \epsilon}\right]}\frac{\epsilon}{1 - \rho} \leq \delta, \qquad \frac{\epsilon}{3} + 2\delta \leq (1 - 2\delta)\rho \ .
\end{equation}
Then, the sequence of iterates $\{x_k\}_{k=0}^{+\infty}$ converges to the optimal solution $x_*$ with
\begin{equation}\label{proof_thm_self_concordant_3}
\|r_{k+1}\| \leq \rho\|r_k\|, \qquad \forall k \geq 0.
\end{equation}
Furthermore, the matrices $\{B_k\}_{k=0}^{+\infty}$ stay in a neighborhood of  $\nabla^{2}{f(x_*)}$ defined as
\begin{equation}\label{proof_thm_self_concordant_4}
\|\hat{B}_{k+1} - I\|_F \leq 2\delta, \qquad \forall k \geq 0.
\end{equation}
Moreover, the norms $\{\|\hat{B}_k\|\}_{k=0}^{+\infty}$ and $\{\|\hat{B}_k^{-1}\|\}_{k=0}^{+\infty}$ are all uniformly bounded above by
\begin{equation}\label{proof_thm_self_concordant_5}
\|\hat{B}_k\| \leq 2\delta + 1, \qquad \|\hat{B}_k^{-1}\| \leq 1 + \rho, \qquad \forall k \geq 0.
\end{equation}
We apply the same induction technique used in the proof of Lemma~\ref{lemma:lemma_8} to prove the above linear convergence results and utilize the potential function in \eqref{proof_thm_self_concordant_2} and Lemma~\ref{lemma:lemma_11}. Finally we can prove the superlinear convergence results of
\begin{equation}
\frac{\|\nabla^{2}f(x_*)^\frac{1}{2}(x_k - x_0)\|}{\|\nabla^{2}f(x_*)^\frac{1}{2}(x_0 - x_*)\|} \leq \left(\frac{2\sqrt{2}\delta(1 + \rho)(1 + \frac{\epsilon}{3})q\sqrt{k} + \frac{(1 + \rho)(1 + \frac{\epsilon}{3})\epsilon}{3(1 - \rho)}}{k}\right)^k, \qquad \forall k \geq 1,
\end{equation}
\begin{equation}
\frac{f(x_k) - f(x_0)}{f(x_0) - f(x_*)} \leq (1 + \frac{\epsilon}{3})^2\left(\frac{2\sqrt{2}\delta(1 + \rho)(1 + \frac{\epsilon}{3})q\sqrt{k} + \frac{(1 + \rho)(1 + \frac{\epsilon}{3})\epsilon}{3(1 - \rho)}}{k}\right)^{2k}, \qquad \forall k \geq 1,
\end{equation}
where $q = \max_{k \geq 0}{\sqrt{\frac{1}{\phi_k + (1 - \phi_k)\frac{1 - 2\delta}{1 + 2\delta}}}} \in \left[1, \sqrt{\frac{1 + 2\delta}{1 - 2\delta}}\right]$. This proof is based on the linear convergence results of \eqref{proof_thm_self_concordant_3}, \eqref{proof_thm_self_concordant_4}, \eqref{proof_thm_self_concordant_5} and is the same as the proof in Theorem~\ref{thm_broyden}, except that here we replace the results of Lemma~\ref{lemma:lemma_3} by the results of Lemma~\ref{lemma:lemma_10}, substitute the results of Lemma~\ref{lemma:lemma_4} with the results of Lemma~\ref{lemma:lemma_11} and utilize the intermediate inequality \eqref{proof_thm_self_concordant_1} instead of \eqref{lemma_7_1}. Notice that all the term $\frac{\epsilon}{2}$ has been replaced with the term $\frac{\epsilon}{3}$ since in this setting, we use the term $2\|r_t\|$ instead of the term $\frac{\sigma_t}{2}$ and $2\|r_t\| \leq 2\|r_0\| \leq 2\frac{\epsilon}{6} = \frac{\epsilon}{3}$.

{{
\bibliographystyle{unsrt}
\bibliography{bmc_article}
}}

\end{document}